\numberwithin{equation}{section}
\numberwithin{figure}{section}
\theoremstyle{plain}
\newtheorem{thm}{\protect\theoremname}[section]
\theoremstyle{remark}
\newtheorem{rem}[thm]{\protect\remarkname}
\theoremstyle{plain}
\newtheorem{prop}[thm]{\protect\propositionname}
\theoremstyle{plain}
\newtheorem{lem}[thm]{\protect\lemmaname}
\providecommand{\lemmaname}{Lemma}
\providecommand{\propositionname}{Proposition}
\providecommand{\remarkname}{Remark}
\providecommand{\theoremname}{Theorem}
\begin{document}
\title{\textbf{Classical solutions to local first-order extended mean field
games}}
\author{Sebastian Muñoz}
\maketitle
\begin{abstract}
We study the existence of classical solutions to a broad class of
local, first order, forward-backward extended mean field games systems,
that includes standard mean field games, mean field games with congestion,
and mean field type control problems. We work with a strictly monotone
cost that may be fully coupled with the Hamiltonian, which is assumed
to have superlinear growth. Following previous work on the standard
first order mean field games system, we prove the existence of smooth
solutions under a coercivity condition that ensures a positive density
of players, assuming a strict form of the uniqueness condition for
the system. Our work relies on transforming the problem into a partial
differential equation with oblique boundary conditions, which is elliptic
precisely under the uniqueness condition.
\end{abstract}
\textit{\small{}MSC: 35Q89, 35B65, 35J66, 35J70. }{\small\par}

\noindent \textit{\small{}Keywords: quasilinear elliptic equations;
oblique derivative problems; Bernstein method; non-linear method of
continuity; Hamilton-Jacobi equations.}{\small\par}

\noindent 
\global\long\def\xo{\bar{x}}%

\global\long\def\tr{\text{{Tr}}}%

\global\long\def\utilde{\tilde{u}}%

\global\long\def\diag{\text{{diag}}}%

\global\long\def\weak{\overset{\ast}{\rightharpoonup}}%

\global\long\def\intQ{\int\int_{Q_{T}}}%

\global\long\def\intO{\int_{\mathbb{T}^{d}}}%

\global\long\def\QT{\overline{Q_{T}}}%

\noindent \tableofcontents{}

\section{Introduction\label{sec:Introduction}}

In this paper, we prove existence of classical solutions to a broad
class of first order mean field games systems (MFG for short) with
a local coupling, which includes standard MFG, MFG with congestion,
and mean field type control problems. For this purpose, we study the
MFG system:

\begin{equation}
\tag{EMFG}\begin{cases}
-u_{t}+H(x,D_{x}u,m)=0 & (x,t)\in Q_{T}=\mathbb{T}^{d}\times(0,T),\\[1pt]
m_{t}-\textrm{div}(B(x,D_{x}u,m))=0 & (x,t)\in Q_{T},\\[4pt]
m(0,x)=m_{0}(x),\;u(x,T)=g(x,m(x,T)) & x\in\mathbb{T}^{d},
\end{cases}\label{eq:emfg}
\end{equation}
where $-H(x,p,m):\mathbb{T}^{d}\times\mathbb{R}^{d}\times(0,\infty)\rightarrow\mathbb{R}$
and $g(x,m):\mathbb{T}^{d}\times(0,\infty)\rightarrow\mathbb{R}$
are strictly increasing in $m$, and $m_{0}:\mathbb{T}^{d}\rightarrow\mathbb{R}$
is a positive probability density. 

MFG were introduced by Lasry and Lions \cite{LasryLions,Lions}, and
at the same time, in a particular setting, by Caines, Huang, and Malhamé
\cite{caines}. They are non-cooperative differential games with infinitely
many players, in which the players find an optimal strategy by observing
the distribution of the others. 

The system (\ref{eq:emfg}) was introduced by Lions and Souganidis
in \cite{LionsSoug}, who coined the term extended MFG, to simultaneously
study several MFG type problems for which, in contrast to the case
of standard MFG, the vector field $B$ does not necessarily equal
$mD_{p}H$. It was shown in \cite{LionsSoug} that (\ref{eq:emfg})
has at most one classical solution if
\begin{equation}
-4H_{m}D_{p}B>(B_{m}-D_{p}H)\otimes(B_{m}-D_{p}H).\label{eq:ellipticity weak}
\end{equation}
In the case of standard MFG with a \emph{separated} Hamiltonian, $H\equiv H(x,p)-f(x,m)$,
(\ref{eq:ellipticity weak}) simply reduces to the standard convexity
assumption for $H(x,p)$ in the second variable and the monotonicity
of $f$. 

Before stating our results, we briefly describe some of the existing
work on the well-posedness and regularity of (\ref{eq:emfg}). For
the case of standard MFG with a separated Hamiltonian, there exists
a complete theory of weak solutions (developed by Cardaliaguet, Graber,
Porretta, and Tonon \cite{Cardalaguiet,CardaliaguetGraber,CardaliaguetGraberPorrettaTonon}
in the degenerate case $g_{m}\equiv0$, that is, when $g$ is independent
of $m$, and by the author \cite{Munoz} in the non-degenerate case
considered here). Moreover, it was shown by the author, in \cite{Munoz},
that the solutions are classical under the coercivity assumption 
\begin{equation}
\lim_{m\rightarrow0^{+}}H(x,p,m)=+\infty.\label{eq:coercivity}
\end{equation}
From the optimal control perspective, (\ref{eq:coercivity}) corresponds
to placing a very strong incentive for players to occupy low-density
regions, and this forces $m>0$. For first order MFG systems with
congestion, weak solutions were shown to exist by A. Porretta and
Y. Achdou \cite{porretta slides}, but classical solutions had not
been obtained so far. Second order MFG systems with congestion were
also studied in \cite{achdou porretta,EvangelistaFerreiraGomesNurbekyanVoskanyan,Gomes,Graber},
where weak solutions and short-time existence result in the smooth
setting have been obtained. Finally, for second order mean field type
control problems with congestion, weak solutions were obtained in
\cite{achdou lauriere}. To put our results in context, it is important
to observe that assumption (\ref{eq:coercivity}), and it was not
made in \cite{achdou porretta,porretta slides}. This assumption is
a significant restriction, and it is critical to our methods, as it
ensures the strict positivity of the density, and hence the classical
solutions.

In this paper, we follow the same methodology used in \cite{Munoz}.
Our contribution is stated below. It is a general result that yields
classical solutions to (\ref{eq:emfg}), assuming the strict form
of the uniqueness condition (\ref{eq:ellipticity weak}), as well
as growth assumptions on $H$ and $B$ which are modeled by Hamiltonians
of the type 
\begin{equation}
H=\psi(m)|p|^{\gamma},\;\;\gamma>1,\;\psi>0,\;\psi'\leq0.\label{eq:H growth}
\end{equation}
We remark that, in particular, we do not require the Hamiltonian to
be quadratic or separated, and one of the  applications of Theorem
\ref{thm:smoothsols} is the existence of classical solutions to MFG
systems with congestion. We refer to Section \ref{sec:Assumptions}
for the exact assumptions  \hyperref[eq:M1]{(M)}, \hyperref[eq:Hpp bd]{(H)},
\hyperref[eq:DpB]{(B)} , \hyperref[eq: gx control]{(G)}, and \hyperref[eq:lower coer]{(E)}.
\begin{thm}
\label{thm:smoothsols}Let $0<s<1$, and assume that \hyperref[eq:M1]{(M)},
\hyperref[eq:Hpp bd]{(H)}, \hyperref[eq:DpB]{(B)} , \hyperref[eq: gx control]{(G)},
and \hyperref[eq:lower coer]{(E)} hold. Then there exists a unique
classical solution $(u,m)\in C^{3,s}(\overline{Q_{T}})\times C^{2,s}(\overline{Q_{T}})$
to \textup{(\ref{eq:emfg})}.
\end{thm}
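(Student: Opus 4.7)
The plan is to follow the roadmap of \cite{Munoz}: recast (\ref{eq:emfg}) as a single scalar quasilinear PDE for $u$ with oblique boundary data on $\mathbb{T}^d \times \{0\}$ and $\mathbb{T}^d \times \{T\}$, and then solve that boundary value problem by the nonlinear method of continuity. First I would exploit the strict monotonicity of $H$ in $m$ (part of \hyperref[eq:M1]{(M)}) to solve the Hamilton--Jacobi equation algebraically for $m$ as a smooth function of $(x, D_x u, -u_t)$, defined on the region where $m>0$. Substituting this expression into the continuity equation yields a scalar second order quasilinear equation for $u$ on $Q_T$ whose principal part is governed precisely by the symmetric form appearing in (\ref{eq:ellipticity weak}). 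Thus the \emph{strict} version of that inequality is exactly uniform ellipticity of the reduced equation. The initial condition $m(\cdot,0)=m_{0}$ and the terminal condition $u(\cdot,T)=g(\cdot,m(\cdot,T))$ translate, thanks to the monotonicity of $H$ and $g$ in $m$, into oblique derivative conditions for $u$ at $t=0$ and $t=T$ respectively.

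Once this reduction is in place, I would apply the nonlinear method of continuity along a one-parameter family $\mathcal{L}_{\sigma}$, $\sigma\in[0,1]$, connecting an explicitly solvable problem at $\sigma=0$ with the target equation at $\sigma=1$, and chosen so that the principal part stays uniformly elliptic throughout. Openness of the set of solvable $\sigma$ follows from the implicit function theorem in H\"older spaces, combined with the linear Schauder theory for oblique-derivative problems on the cylinder $\overline{Q_{T}}$. Closedness reduces to uniform $C^{2,s}$ a priori estimates along the family; once these are secured, standard bootstrapping upgrades the solution to the claimed $C^{3,s}\times C^{2,s}$ regularity, and uniqueness is inherited from the Lions--Souganidis result of \cite{LionsSoug}.

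The hard part, as usual in this circle of ideas, is the derivation of these a priori estimates, which I would split into three stages. First, a strict lower bound $m\geq m_{\ast}>0$: the coercivity hypothesis \hyperref[eq:lower coer]{(E)} is engineered so that no admissible solution can let $m$ touch zero, and via the Hamilton--Jacobi equation this also yields pointwise control on $u_{t}$. Second, a gradient bound for $u$: here I would employ the Bernstein method on the reduced scalar equation, differentiating it, testing against $|D_{x}u|^{2}$ with a suitable cutoff or auxiliary function, and closing the inequality using the superlinear growth (\ref{eq:H growth}) together with the structural assumptions \hyperref[eq:Hpp bd]{(H)}, \hyperref[eq:DpB]{(B)}, and \hyperref[eq: gx control]{(G)}. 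Third, with $\|u\|_{C^{1}}$ under control the PDE becomes uniformly elliptic with bounded ingredients, so Krylov--Safonov / Lieberman-type $C^{1,s}$ estimates at the oblique boundary, followed by linear Schauder estimates, close the loop.

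The Bernstein step is the delicate one, because the ellipticity matrix depends on $(D_{x}u,u_{t})$ through the algebraic relation defining $m$, so the extent to which the positive term $-4H_{m}D_{p}B$ dominates the indefinite contribution $(B_{m}-D_{p}H)\otimes(B_{m}-D_{p}H)$ depends on $|D_{x}u|$ in a way that must be calibrated against the growth exponent $\gamma$ in (\ref{eq:H growth}). Getting a self-improving differential inequality here is precisely what the strict form of \hyperref[eq:M1]{(M)} together with \hyperref[eq:Hpp bd]{(H)} and \hyperref[eq:DpB]{(B)} is designed to allow, and I expect most of the technical work of the paper to be spent isolating the correct auxiliary function and carrying out this computation uniformly in $\sigma$.
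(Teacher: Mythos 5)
Your proposal follows essentially the same route as the paper: eliminating $m$ through the strict monotonicity $H_m<0$ to obtain the quasilinear oblique problem (\ref{eq:quasilinear}), a priori $C^0$ and Bernstein gradient estimates, Lieberman H\"older-gradient and linear Schauder theory, the nonlinear method of continuity along an interpolating family, and uniqueness from the Lions--Souganidis condition. The only small discrepancies are organizational: in the paper the global positive lower bound on $m$ is obtained \emph{after} the gradient bound (Corollary \ref{cor:m global bounds}), the Bernstein argument using only the boundary bounds on $m$ from Lemma \ref{lem:aprioriu}, whereas you place it first; and the monotonicity of $H$ in $m$ and the strict uniqueness/ellipticity condition come from \hyperref[eq:Hpp bd]{(H)} and \hyperref[eq:lower coer]{(E)}, not \hyperref[eq:M1]{(M)}.
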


An important natural setting, covered by Theorem \ref{thm:smoothsols}
in full generality, is when the derivatives $H_{m}$ and $D_{x}H$
satisfy growth conditions that are compatible with (\ref{eq:H growth}),
which we write symbolically as
\begin{equation}
mH_{m}\sim H\sim\psi(m)|p|^{\gamma},\;\text{and}\label{eq:natural hm}
\end{equation}
\begin{equation}
\text{ }|D_{x}H|\lesssim\psi(m)|p|^{\gamma}.\label{eq:natural Hx}
\end{equation}
When $\lim_{m\rightarrow\infty}\psi(m)=0$, these correspond to MFG
systems with congestion, of which a typical example is
\begin{equation}
\begin{cases}
-u_{t}+\frac{|D_{x}u|^{2}}{2(m+c_{0})^{\alpha}}-V(x)=f(m), & u(x,T)=g(x,m(x,T))\\[1pt]
m_{t}-\textrm{div}(\frac{m}{(m+c_{0})^{\alpha}}D_{x}u)=0, & m(0,x)=m_{0}(x)\\[4pt]
\end{cases}\label{eq:mfg cong}
\end{equation}
where the conditions $0<\alpha<2$ and $c_{0}\geq0$ ensure that (\ref{eq:ellipticity weak})
and, hence, uniqueness, holds for (\ref{eq:mfg cong}) (see \cite{achdou porretta}).
As was mentioned above, for our results to apply, condition (\ref{eq:coercivity})
is essential, and in this example it amounts to requiring that $\lim_{m\rightarrow0}f(m)=-\infty$.
In particular, Theorem \ref{thm:smoothsols} does not apply in several
important examples such as when $f\equiv0$ or $f\equiv m^{k}$, $k>0$,
which illustrates the restrictive character of (\ref{eq:coercivity}),
in contrast to the results of \cite{achdou porretta,porretta slides}.

Now, Theorem \ref{thm:smoothsols} also allows for a more general
growth behavior than (\ref{eq:natural hm}), namely, for a constant
$0\leq\gamma_{1}\leq\gamma$,
\[
mH_{m}\sim\psi(m)|p|^{\gamma_{1}}.
\]
One reason for working under such generality is that, despite (\ref{eq:natural hm})
being the natural condition for fully general Hamiltonians $H(x,p,m)$
satisfying (\ref{eq:H growth}), it is not satisfied by the important
example of MFG systems with a separated Hamiltonian. Such systems
are nevertheless covered in Theorem \ref{thm:smoothsols}, by setting
$\gamma_{1}=0$.

There are, however, two assumptions that must be strengthened when
straying from (\ref{eq:natural hm}). The first is that, whereas in
the case $\gamma_{1}=\gamma$, our result allows for ``congestions''
in which $\lim_{m\rightarrow\infty}\psi(m)=0$, when $\gamma_{1}<\gamma$
we must instead assume this limit to be positive. The second assumption
is the control required for the $x$-dependence, and can be explained
by comparing Theorem \ref{thm:smoothsols} with \cite[Theorem 1.1]{Munoz}.
In the latter work, we obtained classical solutions in the case of
separated Hamiltonians, only requiring for $D_{x}H(x,p)$ the condition
\[
|D_{x}H(x,p)|\lesssim|p|^{\gamma-\epsilon},\;\;\;\epsilon>0,
\]
which allows the growth of $D_{x}H$ to be arbitrarily close to the
natural one (\ref{eq:natural Hx}). This was achieved by exploiting
in a crucial way the separated structure of the system. On the other
hand, in (\ref{eq:emfg}), no such structure is available, and therefore
treating fully coupled Hamiltonians with $0\leq\gamma_{1}<\gamma$
forces us to impose the stricter control 
\[
|D_{x}H|\lesssim\psi(m)|p|^{\gamma_{2}},
\]
where $\gamma_{2}$ must satisfy $\gamma_{2}<2\gamma_{1}-\gamma+2$.
In other words, in the absence of additional structural assumptions,
the more the growth $\gamma_{1}$ of $H_{m}$ deviates from its natural
value $\gamma$, the more we must restrict the growth $\gamma_{2}$
of the space oscillation $D_{x}H$. 

We will discuss now our methods of proof. The key insight that allows
us to obtain classical solutions to this first order system is the
observation of Lions that, due to the strict monotonicity of $H$
with respect to $m$, one can eliminate the variable $m$ and transform
(\ref{eq:emfg}) into a second order quasilinear equation in $u$
with an oblique, non-linear boundary condition,
\begin{equation}
\tag{Q}\begin{cases}
Qu:=-\tr(A(x,Du)D^{2}u)+b(x,Du)=0 & \text{in }Q_{T},\\[1pt]
Nu:=N(x,t,u,Du)=0 & \text{on }\partial Q_{T},
\end{cases}\label{eq:quasilinear}
\end{equation}
where $Du=(D_{x}u,u_{t})$ and, for $(x,z,p,s)\in\mathbb{T}^{d}\times\mathbb{R\times\mathbb{R}}^{d}\times\mathbb{R}$,
\begin{align}
\tag{Q1}A(x,p,s)=\left(\frac{B_{m}+D_{p}H}{2},-1\right)\otimes\left(\frac{B_{m}+D_{p}H}{2},-1\right)-\begin{pmatrix}\frac{B_{m}-D_{p}H}{2}\otimes\frac{B_{m}-D_{p}H}{2}+H_{m}D_{p}B & 0\\
0 & 0
\end{pmatrix},\label{eq:matrix-1}
\end{align}
\renewcommand*{\theHequation}{nota3.\theequation}
\begin{align}
\tag{Q2}b(x,p,s)= & -D_{x}H(x,p,H^{-1})\cdot B_{m}(x,p,H^{-1})+H_{m}(x,p,H^{-1})\text{div}_{x}B(x,p,H^{-1}),\label{eq:first order term}\\
\tag{N}N(x,0,z,p,s)= & -s+H(x,p,m_{0}(x)),\;N(x,T,z,p,s)=-g(x,H^{-1}(x,p,s))+z,\label{eq:boundary}
\end{align}
and the function $H^{-1}(x,p,s)$ is the inverse of $H$ with respect
to $m$, defined by
\[
H^{-1}(x,p,H(x,p,m))=m.
\]
An important observation that can be seen directly from the definition
of $A$ is that this equation is elliptic precisely when (\ref{eq:ellipticity weak})
holds. For this reason, it is to be expected that the methods of quasilinear
elliptic equations with oblique boundary conditions, which were successful
in obtaining classical solutions to standard MFG systems in \cite{Lions,Munoz},
may also be applied in this more general setting. This is in fact
the approach that we follow here. Namely, we obtain a priori estimates
for $||u||_{C^{0}(\QT)}$ and $||Du||_{C^{0}(\QT)}$, and conclude
the existence of smooth solutions from the classical $C^{1,\alpha}$
estimates for oblique derivative problems (see \cite{Lieberman}),
the Schauder theory for linear oblique problems (see \cite{GilbargTrudinger,Lieberman solvability}),
and the non-linear method of continuity (see \cite{GilbargTrudinger}).

Finally, we discuss some of the newer results that have been obtained
after the first version of this work, as well as possible future directions.
In \cite{Porretta}, A. Porretta showed that one may still obtain
classical solutions to standard MFG with a separated Hamiltonian,
when $\mathbb{T}^{d}$ is replaced by a region in $\mathbb{R}^{d},$
in the setting of the so-called planning problem, where the terminal
density is a prescribed function. On the other hand, in \cite{MimikosMunoz},
N. Mimikos and the author showed that, when $d=1$, the key coercivity
assumption (\ref{eq:coercivity}) may be removed, and classical solutions
are obtained both in the setting of (\ref{eq:emfg}) and the planning
problem. It was also shown that one may weaken the assumption that
$m_{0}$ be strictly bounded away from $0$, and still obtain instant
regularization for times $t>0$, despite the loss of ellipticity at
the initial time. However, it remains an open question whether the
results of \cite{MimikosMunoz} may be extended to dimensions greater
than $1$ and, even in the case $d=1$, whether one may allow $m_{0}$
to vanish in a set of positive measure.
\begin{rem}
We note here that there is some ambiguity with the term \textit{extended
mean field games}, because it is also used to refer to standard MFG
systems in which the Hamiltonian depends on the acceleration of the
players (see, for instance, \cite{Gomes-1}). This is setting is unrelated
to the one present in this work and in \cite{LionsSoug}.
\end{rem}

\subsection*{Notation}

Let $n,k\in\mathbb{N}.$ Given $x,y\in\mathbb{R}^{n},$ $x$ and $y$
will always be understood to be row vectors, and their scalar product
$xy^{T}$ will be denoted by $x\cdot y$. For $0<s<1$, $C^{k,s}(\QT$)
refers to the space of $k$ times differentiable real-valued functions
with $s$--Hölder continuous $k^{\text{th}}$ order derivatives.
If $u\in C^{1}(\QT)$, the notation $Du$ will always refer to the
full gradient in all variables, whereas $D_{x}u$ denotes the gradient
in the space variable only. We write $C=C(K_{1},K_{2},\ldots,K_{M})$
for a positive constant $C$ depending monotonically on the non-negative
quantities $K_{1},\ldots,K_{M}.$ 

\section{\label{sec:Assumptions}Assumptions}

In what follows, $C_{0},\gamma,\gamma_{1},\gamma_{2}$ are fixed constants
satisfying 
\begin{equation}
C_{0}>0,\;\;\gamma>1,\;\;\gamma_{1}\geq0,\;\;\gamma_{2}\leq\gamma_{1}\leq\gamma,\;\;\text{and }\;\;\gamma_{2}<2\gamma_{1}-\gamma+2.\label{eq:condition exponents}
\end{equation}
\renewcommand*{\theHequation}{notag5.\theequation}The continuous
functions $\overline{C},\;\psi:(0,\infty)\rightarrow(0,\infty)$ are
also fixed, with $\psi$ being non-increasing. If $\gamma_{1}<\gamma$,
we further require that
\begin{equation}
\lim_{m\rightarrow\infty}\psi(m)>0.\label{eq:psi positive}
\end{equation}
We note that the case $\gamma_{1}=0$, $\psi\equiv1$, corresponds
to a standard MFG system with a separated Hamiltonian, and the case
\[
\gamma_{1}=\gamma,\;\;\psi(m)\equiv\frac{1}{(m+c_{0})^{\alpha}}
\]
corresponds to a MFG system with congestion. 
\begin{itemize}
\item[(M)] (Assumptions on $m_{0}$) The initial density $m_{0}$ satisfies
\begin{equation}
\tag{M1}m_{0}\in C^{4}(\mathbb{T}^{d}),\;m_{0}>0,\text{ and }\int_{\mathbb{T}^{d}}m_{0}=1.\label{eq:M1}
\end{equation}
\renewcommand*{\theHequation}{notag5.\theequation}
\item[(H)]  (Assumptions on $H$) The function $H:\mathbb{T}^{d}\times\mathbb{R}^{d}\times(0,\infty)\rightarrow\mathbb{R}$
is four times continuously differentiable and satisfies $H_{m}<0$.
Moreover, for $(x,p,m)\in\mathbb{T}^{d}\times\mathbb{R}^{d}\times(0,\infty)$,
\begin{equation}
\tag{H1}\frac{1}{C_{0}}\psi(m)(1+|p|){}^{\gamma-2}I\leq D_{pp}^{2}H\leq C_{0}\psi(m)(1+|p|)^{\gamma-2}I,\label{eq:Hpp bd}
\end{equation}
\renewcommand*{\theHequation}{notag6.\theequation}
\begin{equation}
\tag{H2}|D_{p}H|\leq C_{0}\psi(m)(1+|p|)^{\gamma-1},\;\;\;D_{p}H\cdot p\geq\left(1+\frac{1}{C_{0}}\right)H-\overline{C}(m),\label{eq:H qg}
\end{equation}
\renewcommand*{\theHequation}{notag7.\theequation}
\begin{equation}
\tag{HM1}\frac{1}{C_{0}}\psi(m)|p|^{\gamma_{1}}\leq-mH_{m}\leq C_{0}\psi(m)|p|^{\gamma_{1}}+\overline{C}(m),\label{eq:Hm growth}
\end{equation}
\renewcommand*{\theHequation}{notag7.\theequation}
\begin{equation}
\tag{HM2}|mH_{mm}|\leq-C_{0}H_{m},\;\;\;|p||D_{p}H_{m}|\leq C_{0}\psi(m)(1+|p|)^{\gamma_{1}},\label{eq:Hm bd}
\end{equation}
\renewcommand*{\theHequation}{notag8.\theequation}
\end{itemize}
\begin{equation}
\tag{HX1}|D_{x}H|,\;|D_{xx}^{2}H|\leq C_{0}\psi(m)(1+|p|)^{\gamma_{2}},\;\;|D_{xp}^{2}H|\leq C_{0}\psi(m)(1+|p|)^{\gamma_{2}-1},\label{eq:HX HXX HXP bd}
\end{equation}
\renewcommand*{\theHequation}{notag9.\theequation}
\begin{equation}
\tag{HX2}m|D_{x}H_{m}|\leq C_{0}\psi(m)(1+|p|)^{\gamma_{2}},\label{eq:mHmx bd}
\end{equation}
\renewcommand*{\theHequation}{notag10.\theequation}
\begin{equation}
\tag{HX3}|D_{x}H(x,0,m)|\leq C_{0}.\label{eq:DxH(0) bound}
\end{equation}
\renewcommand*{\theHequation}{notag11.\theequation}
\begin{itemize}
\item[(B)] (Assumptions on $B$) The function $B:\mathbb{T}^{d}\times\mathbb{R}^{d}\times[0,\infty)\rightarrow\mathbb{R}^{d}$
is four times continuously differentiable, $B(\cdot,\cdot,0)\equiv0$
and, mirroring the assumptions on $H$, $B$ satisfies, for $(x,p,m)\in\mathbb{T}^{d}\times\mathbb{R}^{d}\times(0,\infty)$,
\begin{equation}
\tag{B1}\frac{1}{C_{0}}m\psi(m)|p|{}^{\gamma-2}I\leq D_{p}B\leq C_{0}m\psi(m)(1+|p|)^{\gamma-2}I,\label{eq:DpB}
\end{equation}
\renewcommand*{\theHequation}{notag12.\theequation}
\begin{equation}
\tag{B2}|B_{m}|\leq C_{0}\psi(m)(1+|p|)^{\gamma_{1}-1},\;\;\;|p||D_{p}B_{m}|\leq C_{0}\psi(m)(1+|p|)^{\gamma_{1}-2},\;\;\;|D_{pp}^{2}B|\leq C_{0}m\psi(m)(1+|p|)^{\gamma-3},\label{eq: Bm bd, DpBm bd,  DppB bd}
\end{equation}
\renewcommand*{\theHequation}{notag13.\theequation}
\begin{equation}
\tag{BM}(1+|p|)|B_{mm}|\leq-C_{0}H_{m},\label{eq:Bmm bd}
\end{equation}
\renewcommand*{\theHequation}{notag14.\theequation}
\begin{equation}
\tag{BX1}|D_{x}B|,|D_{xx}^{2}B|\leq mC_{0}\psi(m)(1+|p|)^{\gamma_{2}-1},\;\;|D_{x}B_{m}|\leq C_{0}\psi(m)(1+|p|)^{\gamma_{2}-1},\label{eq:Bx Bxx Bxm bd}
\end{equation}
\renewcommand*{\theHequation}{notag15.\theequation}
\begin{equation}
\tag{BX2}|D_{xp}^{2}B|\leq C_{0}m\psi(m)(1+|p|)^{\gamma_{2}-2},\label{eq:Bxp bd}
\end{equation}
\renewcommand*{\theHequation}{notag16.\theequation}
\begin{equation}
\tag{BX3}|D_{x}B(x,0,m)|\leq C_{0}m.\label{eq:DxB(0) bound}
\end{equation}
\renewcommand*{\theHequation}{notag17.\theequation}
\item[(G)] (Assumptions on $g$) The function $g:\mathbb{T}^{d}\times(0,\infty)\rightarrow\mathbb{R}$
is four times continuously differentiable and satisfies $g_{m}>0$.
Furthermore, for each $x\in\mathbb{T}^{d}$,
\begin{align}
\tag{GX} & \lim_{m\rightarrow\infty}g(x,m)=\sup_{\mathbb{T}^{d}\times[0,\infty)}g,\text{ and }\lim_{m\rightarrow0^{+}}g(x,m)=\inf_{\mathbb{T}^{d}\times[0,\infty)}g.\label{eq: gx control}
\end{align}
\renewcommand*{\theHequation}{notag18.\theequation}
\item[(E)] (Strict ellipticity of the system) The functions $H$ and $B$ satisfy
the conditions
\begin{align}
\tag{E1}\lim_{m\rightarrow0^{+}}H(x,p,m)= & +\infty\text{\, uniformly in }(x,p)\in\mathbb{T}^{d}\times\mathbb{R}^{d},\label{eq:lower coer}
\end{align}
\renewcommand*{\theHequation}{notag19.\theequation}
\begin{equation}
\tag{E2}\lim_{m\rightarrow\infty}H(x,p,m)-C_{0}\psi(m)|p|^{\gamma}=-\infty\text{ \,uniformly in }(x,p)\in\mathbb{T}^{d}\times\mathbb{R}^{d},\label{eq:upper coer}
\end{equation}
\renewcommand*{\theHequation}{notag20.\theequation}
\begin{equation}
\tag{E3}-4H_{m}D_{p}B\geq\left(1+\frac{1}{C_{0}}\right)(B_{m}-D_{p}H)\otimes(B_{m}-D_{p}H).\label{eq:ellipticity}
\end{equation}
\renewcommand*{\theHequation}{notag21.\theequation}
\end{itemize}
\begin{rem}
In view of (\ref{eq:Hpp bd}), up to increasing the values of $C_{0}$
and $\overline{C}$, we may assume, with no loss of generality, that,
for $(x,p,m)\in\mathbb{T}^{d}\times\mathbb{R}^{d}\times(0,\infty)$,

\begin{equation}
\frac{\psi(m)}{C_{0}}|p|^{\gamma}-\overline{C}(m)\leq H(x,p,m)\leq\psi(m)C_{0}|p|^{\gamma}+\overline{C}(m).\label{eq:H lower and upper bounds}
\end{equation}
\renewcommand*{\theHequation}{notag22.\theequation}

\begin{equation}
\max_{[\min m_{0},\max m_{0}]}|H(x,0,\cdot)|\leq C_{0},\;\;\;\max_{[\min m_{0},\max m_{0}]}|B(x,0,\cdot)|\leq C_{0}.\label{eq:}
\end{equation}
Moreover, in the case that (\ref{eq:psi positive}) holds, we may
also write

\begin{equation}
\psi(m)\geq\frac{1}{C_{0}}.\label{eq:-1}
\end{equation}
We also note that there is certainly room for weaking the regularity
assumptions on the data, at the expense of further technical complications.
We refer to \cite{MimikosMunoz} for an illustration of this.
\end{rem}

\section{A priori estimates and classical solutions\label{sec:Classical-solutions}}

\subsection{Derivation of the quasilinear equation}

We begin by briefly showing the equivalence between the first-order
system (\ref{eq:emfg}) and the elliptic equation (\ref{eq:quasilinear}),
since the latter will be our main object of analysis in the following
sections.
\begin{prop}
\label{prop:quasil}Let $(u,m)\in C^{2}(\overline{Q_{T}})\times C^{1}(\overline{Q_{T}})$.
Then $(u,m)$ is a solution to (\ref{eq:emfg}) if and only if $u$
is a solution to (\ref{eq:quasilinear}), and $m$ is given by
\begin{equation}
m=H^{-1}(x,D_{x}u,u_{t}).\label{eq:eliminate m}
\end{equation}
\end{prop}

\begin{proof}
The Hamilton-Jacobi equation

\[
-u_{t}+H(x,D_{x}u,m)=0
\]
may be rewritten as (\ref{eq:eliminate m}). We thus need to show
that, after substituting (\ref{eq:eliminate m}) in the continuity
equation

\[
m_{t}-\text{div}(B(x,D_{x}u,m))=0,
\]
one obtains (\ref{eq:quasilinear}). Indeed, the substitution yields

\begin{multline*}
0=\frac{1}{H_{m}}(u_{tt}-D_{p}H\cdot D_{x}u_{t})-\text{div}_{x}B-\tr(D_{p}BD_{xx}^{2}u)-B_{m}\cdot\text{div}_{x}(H^{-1})\\
=\frac{1}{H_{m}}(u_{tt}-D_{p}H\cdot D_{x}u_{t})-\text{div}_{x}B-\tr(D_{p}BD_{xx}^{2}u)-\frac{1}{H_{m}}B_{m}\cdot(-D_{x}H+D_{p}HD_{xx}^{2}u+D_{x}u_{t}),
\end{multline*}
that is,

\begin{equation}
R+b(x,Du)=0,\label{eq:-3}
\end{equation}
where the first order term $b(x,Du)$ is given by (\ref{eq:first order term}),
and the second order term $R$ is

\[
R=-u_{tt}+(B_{m}+D_{p}H)\cdot D_{x}u_{t}-B_{m}D_{xx}^{2}u\cdot D_{p}H+H_{m}\tr(D_{p}BD_{xx}^{2}u).
\]
Now, $R$ may be rewritten as

\begin{multline}
R=-u_{tt}+2\frac{B_{m}+D_{p}H}{2}\cdot D_{x}u_{t}-\frac{B_{m}+D_{p}H}{2}D_{xx}^{2}u\cdot\frac{B_{m}+D_{p}H}{2}\\
+\frac{B_{m}-D_{p}H}{2}D_{xx}^{2}u\cdot\frac{B_{m}-D_{p}H}{2}+H_{m}\tr(D_{p}BD_{xx}^{2}u)=-\left(\frac{B_{m}+D_{p}H}{2},-1\right)D^{2}u\cdot\left(\frac{B_{m}+D_{p}H}{2},-1\right)\\
+\frac{B_{m}-D_{p}H}{2}D_{xx}^{2}u\cdot\frac{B_{m}-D_{p}H}{2}+H_{m}\tr(D_{p}BD_{xx}^{2}u)=-\tr(AD^{2}u).\label{eq:-4}
\end{multline}
where $A$ is given by (\ref{eq:matrix-1}). Substituting (\ref{eq:-4})
in (\ref{eq:-3}) thus yields the desired elliptic equation. As for
the boundary conditions, we may rewrite the initial and terminal conditions
in (\ref{eq:emfg}) as

\[
H^{-1}(x,D_{x}u(x,0),u_{t}(x,0))=m_{0}(x),\,\,\,g(x,H^{-1}(x,D_{x}u(x,T),u_{t}(x,T)))=u(x,T),
\]
that is,

\[
N(x,t,u,D_{x}u,u_{t})=0,\,\,\,(x,t)\in\mathbb{T}^{d}\times\{0,T\},
\]
where $N$ is given by (\ref{eq:boundary}).
\end{proof}
In view of Proposition \ref{prop:quasil}, (\ref{eq:emfg}) and (\ref{eq:quasilinear})
will be treated tacitly as the same problem throughout the rest of
the paper.

\subsection{\label{subsec:estimates u and m(T)}Estimates for the solution and
the terminal density}

In the first result of this section, Lemma \ref{lem:aprioriu}, we
will estimate the $L^{\infty}$ norms of $u$ and the terminal density
$m(\cdot,T)$, where $(u,m)$ is a solution to (\ref{eq:emfg}). In
order to provide an explicit form for the estimates of this section,
we consider the continuous, strictly increasing functions $f_{0},f_{1},g_{0},g_{1}:(0,\infty)\rightarrow\mathbb{R}$
defined by
\begin{align*}
f_{0}(m)=\min_{x\in\mathbb{T}^{d}}\left(-H(x,0,m)\right),\;\;f_{1}(m)=\max_{x\in\mathbb{T}^{d}}\left(-H(x,0,m)\right),\\
g_{0}(m)=\min_{\mathbb{T}^{d}}g(\cdot,m),\;\;g_{1}(m)=\max_{\mathbb{T}^{d}}g(\cdot,m),
\end{align*}
and the non-decreasing function $h:(0,\infty)\rightarrow[0,\infty)$
by

\begin{equation}
h(s)=\sup\{m>0:\sup_{(x,p)\in\mathbb{R}^{d}}H(x,p,m)-C_{0}|p|^{\gamma}\psi(m)\geq-s\},\label{eq:h defi}
\end{equation}
which is well-defined in view of (\ref{eq:upper coer}). 
\begin{lem}
\label{lem:aprioriu} There exists $C=C(C_{0})$ such that, for any
solution $(u,m)\in C^{2}(\overline{Q_{T}})\times C^{1}(\overline{Q_{T}})$
of \textup{(\ref{eq:emfg})}, and every $(x,t)\in\overline{Q_{T}},$
\begin{equation}
g_{0}f_{1}^{-1}(-C)-C(e^{CT}-e^{Ct})\leq u(x,t)\leq g_{1}f_{0}^{-1}(C)+C(e^{CT}-e^{Ct}),\text{ and}\label{aprioriu1}
\end{equation}
\begin{align}
0<g_{1}^{-1}g_{0}f_{1}^{-1}(-C)\leq & m(x,T)\leq g_{0}^{-1}g_{1}f_{0}^{-1}(C),\label{eq:a priori m(T) inequality}
\end{align}
\end{lem}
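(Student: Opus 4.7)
The plan is to combine a maximum principle argument for the Hamilton--Jacobi equation with the terminal condition $u(\cdot,T) = g(\cdot, m(\cdot,T))$ and the strict monotonicity of $H$ and $g$ in $m$. The central observation is that at a spatial extremum of $u(\cdot,t)$, $D_{x}u$ vanishes and the first equation of \eqref{eq:emfg} reduces to the scalar relation $u_{t} = H(x,0,m)$, which ties the time evolution of the extremum to the density at that point.

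For the upper bound on $u$, I would introduce $v(x,t) := u(x,t) - g_{1}(f_{0}^{-1}(C)) - C(e^{CT} - e^{Ct})$ with $C = C(C_{0})$ chosen sufficiently large, and argue by contradiction that $\sup_{\overline{Q_{T}}} v \leq 0$. At any maximum point $(x_{0},t_{0})$ of $v$ we have $D_{x}u(x_{0},t_{0}) = 0$, so the HJB equation yields $u_{t}(x_{0},t_{0}) = H(x_{0},0,m(x_{0},t_{0}))$. If $t_{0} = 0$, the sign condition $v_{t}(x_{0},0) \leq 0$ together with the remark at the end of Section~\ref{sec:Assumptions} (which gives $|H(x_{0},0,m_{0}(x_{0}))| \leq C_{0}$ since $m_{0}$ is bounded between positive constants) produces a contradiction once $C$ is taken large enough. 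If $t_{0} = T$, the sign condition $v_{t}(x_{0},T) \geq 0$ combined with $H(x,0,m) \leq -f_{0}(m)$ forces $m(x_{0},T)$ to be bounded from above, while $v(x_{0},T) > 0$ together with $u(x_{0},T) = g(x_{0},m(x_{0},T)) \leq g_{1}(m(x_{0},T))$ forces $m(x_{0},T) > f_{0}^{-1}(C)$; for a compatible choice of $C$ these are inconsistent. The interior case $0 < t_{0} < T$ is ruled out either by a standard $\varepsilon$-perturbation pushing the extremum to the boundary, or by exploiting that the exponential correction $\phi(t) = C(e^{CT} - e^{Ct})$ is tuned precisely so that the ODE characterizing the extremum's evolution is inconsistent with the HJB at the chosen $C$.

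The lower bound on $u$ is obtained symmetrically by applying the same analysis to $\tilde v(x,t) := g_{0}(f_{1}^{-1}(-C)) - C(e^{CT} - e^{Ct}) - u(x,t)$, using the lower bound $u(\cdot,T) \geq g_{0}(m(\cdot,T))$ in the $t = T$ case, and invoking the coercivity assumption \eqref{eq:lower coer} to ensure $f_{1}^{-1}(-C) > 0$. Finally, the bounds on $m(\cdot,T)$ follow by restricting the just-established bounds on $u$ to $t = T$: the sandwich $g_{0}(f_{1}^{-1}(-C)) \leq g(x,m(x,T)) \leq g_{1}(f_{0}^{-1}(C))$ together with $g_{0}(\cdot) \leq g(x,\cdot) \leq g_{1}(\cdot)$ and the strict monotonicity of $g_{0},g_{1}$ yields $g_{1}^{-1}(g_{0}(f_{1}^{-1}(-C))) \leq m(x,T) \leq g_{0}^{-1}(g_{1}(f_{0}^{-1}(C)))$; strict positivity of the lower bound follows from \eqref{eq:lower coer} combined with \eqref{eq: gx control}.

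The main obstacle I expect is the interior-maximum case in the argument for $u$: unlike parabolic problems, first-order Hamilton--Jacobi equations do not enjoy an interior Hopf-type maximum principle, so this case requires either the perturbation trick or a careful matching between the exponential correction and the behaviour of $H$ at the extremum, and the constants must be chosen consistently across the $t=0$ and $t=T$ contradictions. A secondary subtlety to check is that $f_{0}^{-1}(C)$ and $f_{1}^{-1}(-C)$ are well-defined at the relevant arguments, which relies on the coercivity hypotheses \eqref{eq:lower coer}--\eqref{eq:upper coer} together with the strict monotonicity $H_{m} < 0$.
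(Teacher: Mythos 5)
Your boundary cases ($t_0=0$ and $t_0=T$) are essentially the paper's argument, but the step you yourself flag as the ``main obstacle'' --- the interior maximum --- is a genuine gap, and neither of the two fixes you suggest can close it. At an interior space-time maximum of your corrected function all you can extract from the system (\ref{eq:emfg}) is the single scalar identity $u_t(x_0,t_0)=H(x_0,0,m(x_0,t_0))$, in which $m(x_0,t_0)$ is completely uncontrolled at this stage of the argument (interior bounds on $m$ are only obtained later, in Corollary \ref{cor:m global bounds}, \emph{after} the gradient bound). Since by (\ref{eq:lower coer})--(\ref{eq:upper coer}) the map $m\mapsto H(x,0,m)$ sweeps out all large values of either sign, this identity can be satisfied for any value of $u_t$, so no choice of exponential correction, and no $\varepsilon$-perturbation, produces a sign contradiction from the Hamilton--Jacobi equation alone: the perturbation trick needs a strict subsolution of an operator obeying a maximum principle, and a first-order equation with an unknown zeroth-order datum $m$ provides none. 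Likewise, the ``ODE for the evolution of the spatial maximum'' is $\frac{d}{dt}\max_x u = H(x_t,0,m(x_t,t))$, which again involves interior values of $m$ and cannot be compared with anything.

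What is missing is precisely the mechanism the paper uses: the continuity equation must enter. The paper works with the eliminated, second-order formulation (\ref{eq:quasilinear}): at an interior critical point of $v=u+\zeta(t)$ one has $D_xu=0$, and the first-order term $b$ in (\ref{eq:first order term}), evaluated at $p=0$, is bounded by $C(1+|u_t|)=C(1+\zeta'(t))$ thanks to (\ref{eq:Hm bd}), (\ref{eq:Bmm bd}), (\ref{eq:DxH(0) bound}), (\ref{eq:DxB(0) bound}) and the normalization in the remark closing Section \ref{sec:Assumptions} --- this is how the unknown interior density is traded for the known quantity $u_t$. Then $-\tr(A\,D^2v)=-b-\zeta''\le C(1+\zeta')-\zeta''<0$ for the exponential $\zeta$, which contradicts $A\ge 0$ (guaranteed by (\ref{eq:ellipticity})) and $D^2v\le 0$ at an interior maximum; hence the maximum sits on $\partial Q_T$, and your two boundary computations take over. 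Without invoking (\ref{eq:quasilinear}) (or the continuity equation in some equivalent way) the interior case cannot be ruled out. A secondary point: in your $t_0=T$ case the two conclusions you derive, $m(x_0,T)>f_0^{-1}(C)$ and $m(x_0,T)\le f_0^{-1}(C^2e^{CT})$, are not actually inconsistent for the same $C$; the correct outcome of that case is not a contradiction but the bound $u(x_0,T)\le g_1 f_0^{-1}(C^2e^{CT})$, which is then propagated from the maximum point to all of $\overline{Q_T}$, as in the paper. Your derivation of (\ref{eq:a priori m(T) inequality}) from (\ref{aprioriu1}) is fine.
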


\begin{proof}
The proof of this statement is analogous to \cite[Lemma 3.1, Corollary 3.2]{Munoz}.
We modify the function $u$ in a way that ensures that its maximum
value is achieved at $t=T$. This will allow us to conclude by exploiting
the fact that the boundary condition (\ref{eq:boundary}) that holds
at the terminal time is of ``Robin type''. For this purpose, we
set $v(x,t)=u(x,t)+\zeta(t)$, where $\zeta(t)=M(e^{Mt}-e^{MT})$,
for a large parameter $M>1$. Conditions (\ref{eq:Hm bd}) and (\ref{eq:Bmm bd})
imply, respectively, the existence of a uniform Lipschitz bound for
the maps $w\mapsto H^{-1}(x,0,w)H_{m}(x,0,H^{-1}(x,0,w))$ and $w\mapsto B_{m}(x,0,H^{-1}(x,0,w))$.
Therefore, using (\ref{eq:}), we obtain
\[
|mH_{m}(x,0,m)|\leq C(1+|H(x,0,m)|)\text{ and }|B_{m}(x,0,m)|\leq C(1+|H(x,0,m)|).
\]
In view of this, (\ref{eq:quasilinear}), (\ref{eq:first order term}),
(\ref{eq:DxH(0) bound}), and (\ref{eq:DxB(0) bound}) yield that,
at any interior critical point $(x,t)$ of $v$,
\begin{multline*}
-\tr(A(x,Du)D^{2}v)=-\tr(A(x,Du)D^{2}u)-\zeta''(t)=-b(x,Du)-\zeta''(t)=D_{x}H(x,0,m)\cdot B_{m}(x,0,m)\\
-H_{m}(x,0,m)\text{div}_{x}B(x,0,m)-\zeta''(t)\leq C(1+|u_{t}|)-\zeta''(t)=C(1+\zeta'(t))-\zeta''(t)\leq C(1+M^{2}e^{Mt})-M^{3}e^{Mt}.
\end{multline*}
Thus, if $M>\max(1,2C)$, one has $-\tr(A(x,Du)D^{2}v)<0$ at all
interior critical points of $v$, and therefore $v$ must achieve
its maximum value on the boundary $\partial Q_{T}$. If the maximum
is achieved at $t=0$, then $D_{x}v=0,\;v_{t}\leq0$, and so
\[
M^{2}=\zeta'(0)\leq-u_{t}=-H(x,0,m_{0}(x))\leq C_{0}.
\]
Consequently, a sufficiently large value of $M$ forces the maximum
to be achieved at $\{t=T\}$. At such a point $(x,T)$, $D_{x}v=0,\;v_{t}\geq0$,
and, thus, since $\zeta(T)=0$,
\[
M^{2}e^{MT}=\zeta'(T)\geq-u_{t}=-H(x,0,m(x,T))\geq f_{0}(m(x,T))=f_{0}(g^{-1}(x,u(x,T))=f_{0}(g^{-1}(x,v(x,T)),
\]
which yields $v(x,T)\leq g_{1}f_{0}^{-1}(M^{2}e^{MT})$. Since $(x,T)$
is a maximum point of $v=u+\zeta$, this proves the upper bound in
(\ref{aprioriu1}), with the lower bound being obtained through the
same reasoning. 

The second inequality (\ref{eq:a priori m(T) inequality}) then follows
immediately by setting $t=T$ in (\ref{aprioriu1}), using the fact
that, by (\ref{eq: gx control}), the functions $g_{0}$ and $g_{1}$
have the same range.
\end{proof}

\subsection{An overview of the Bernstein argument}

To obtain the gradient estimate, we will make use of a classical method
due to S. Bernstein (see \cite{Bernstein}), for which we will need
to use the linearization of (\ref{eq:quasilinear}), namely
\begin{equation}
L_{u}(v)=-\tr(A(x,Du)D^{2}v)-D_{q}\tr(A(x,Du)D^{2}u)\cdot Dv+D_{q}b(x,Du)\cdot Dv,\label{eq:Linearization definition}
\end{equation}
where, for $(p,s)\in\mathbb{R}^{d}\times\mathbb{R}$, we denote $q=(p,s)$.
The idea behind this classical method is the following general principle
about elliptic equations: convex functions $\phi(u)$ and $\Phi(Du)$
of the solution and its gradient are subsolutions of the linearized
equation, up to an error that can often be controlled. More precisely,
one has
\begin{equation}
L_{u}(\phi(u))=-\phi''DuA\cdot Du+E_{1},\;\;\;L_{u}(\Phi(Du))=-\tr(D^{2}\Phi D^{2}uAD^{2}u)+E_{2},\label{eq:bernstein inf}
\end{equation}
where $E_{1}$ and $E_{2}$ are regarded as error terms to be estimated.
This observation can be exploited to bound $||D_{x}u||_{C^{0}(\QT)}$
as follows. Since $||u||_{C^{0}(\QT)}$ is already known to be bounded
a priori, the problem is equivalent to bounding $v=\phi(u)+\Phi(Du)$,
as long as $\Phi:\mathbb{R}^{d}\rightarrow\mathbb{R}$ is coercive.
At any interior maximum point $(x,t)$ of $v$, one thus has

\[
0\leq L_{u}(v)=-\phi''DuA\cdot Du-\tr(D^{2}\Phi D^{2}uAD^{2}u)+E_{1}+E_{2},
\]
that is,
\begin{equation}
\phi''DuA\cdot Du\leq-\tr(D^{2}\Phi D^{2}uAD^{2}u)+E_{1}+E_{2}.\label{eq:bernstein informal}
\end{equation}
Thus, up to adequately estimating the error $E_{1}+E_{2}$ in terms
of the other two dominant signed terms, (\ref{eq:bernstein informal})
leads naturally to a gradient bound. 

Now, we must note that the argument above applies only to interior
maxima, so the possibility of the maximum being achieved on $\partial Q_{T}$
must be accounted for separately. In the usual case of Dirichlet boundary
conditions, the bound would follow automatically since $u|_{\partial Q_{T}}$
would be an a priori given function, but since (\ref{eq:boundary})
defines an oblique boundary condition instead, an additional argument
must be made. One can proceed by linearizing the boundary operator
$N$ and repeating the Bernstein process for this first order operator
in place of $L_{u}$. Just like in the case of (\ref{eq:bernstein inf}),
the linearization is computed by differentiating both sides of the
boundary equation. Whereas the ellipticity of (\ref{eq:quasilinear})
is what allows $E_{1}+E_{2}$ in (\ref{eq:bernstein informal}) to
be estimated, the error at the boundary is instead controlled with
the dominant signed term $D_{p}H\cdot D_{x}u$ by virtue of the superlinear
growth (\ref{eq:H qg}) of $H$, the existing bounds on $m|_{\partial Q_{T}}$
and the non-degeneracy of the boundary condition. Indeed, bounds on
$m(\cdot,0)$ and $D_{x}m(\cdot,0)$ are available because $m_{0}$
is given a priori, and Lemma \ref{lem:aprioriu} provides bounds for
$m(\cdot,T)$, albeit not for $D_{x}m(\cdot,T)$. The error terms
that involve $D_{x}m(\cdot,T)$ have, however, a favorable sign thanks
to the ``Robin type'' nature of (\ref{eq:boundary}) at time $T$
that comes from the strict monotonicity of $g$.

\subsection{Estimates for the space-time gradient}

To carry out the strategy described above for the gradient estimate,
we will require explicit computations of the error terms $E_{1}$
and $E_{2}$ described in (\ref{eq:bernstein inf}), provided by the
following lemma. We remind the reader that $\cdot$ denotes the standard
dot product, and all vectors are taken to be rows. 
\begin{lem}
\label{lem:(Bernstein's-method)}Let $\Phi(p,s)\in C^{2}(\mathbb{R}^{d+1})$,
assume that $(u,m)\in C^{3}(\overline{Q_{T}})\times C^{2}(\overline{Q_{T}})$
solves \textup{(\ref{eq:emfg}),} and set $v(x,t)=\Phi(Du(x,t))$.
For each $q=(p,s)\in\mathbb{R}^{d+1}$, and for each $(x,t)\in Q_{T}$,
define 
\[
\zeta(p,s)=-s+D_{p}H(x,D_{x}u,m)\cdot p,\;\;Y^{+}=B_{m}+D_{p}H,\;\;Y^{-}=B_{m}-D_{p}H
\]
Then the following identities hold:
\begin{multline}
D_{q}\text{\emph{\ensuremath{\tr}}}(AD^{2}u)\cdot q=(-D_{x}u_{t}+\frac{1}{2}Y^{+}D_{xx}^{2}u)(D_{p}Y^{+}p^{T}-(Y_{m}^{+})^{T}H_{m}^{-1}\zeta)+\frac{1}{2}Y^{-}D_{xx}^{2}u(D_{p}Y^{-}p^{T}-(Y_{m}^{-})^{T}H_{m}^{-1}\zeta)\\
-(D_{p}H_{m}\cdot p-H_{mm}H_{m}^{-1}\zeta)\text{\emph{\ensuremath{\tr}}}(D_{p}BD_{xx}^{2}u)-H_{m}(D_{p}\text{\emph{\ensuremath{\tr}}}(D_{p}BD_{xx}^{2}u)\cdot p-\text{\emph{\ensuremath{\tr}}}(D_{p}B_{m}D_{xx}^{2}u)H_{m}^{-1}\zeta),\label{eq:D_q(A)}
\end{multline}
\begin{multline}
D_{q}b(x,Du)\cdot q=-B_{m}(D_{xp}^{2}Hp^{T}-D_{x}H_{m}^{T}H_{m}^{-1}\zeta)-D_{x}H(D_{p}B_{m}p^{T}-B_{mm}^{T}H_{m}^{-1}\zeta)\\
+H_{m}(D_{p}\textup{div}_{x}B\cdot p-\textup{div}_{x}B_{m}H_{m}^{-1}\zeta)+\textup{div}_{x}B(D_{p}H_{m}\cdot p-H_{mm}H_{m}^{-1}\zeta),\label{eq:Dqb}
\end{multline}
\begin{multline}
\text{\emph{\ensuremath{\tr}}}(A_{x_{i}}D^{2}u)=(-D_{x}u_{t}+\frac{1}{2}Y^{+}D_{xx}^{2}u)\cdot(Y_{x_{i}}^{+}-Y_{m}^{+}H_{m}^{-1}H_{x_{i}})+\frac{1}{2}Y^{-}D_{xx}^{2}u\cdot(Y_{x_{i}}^{-}-Y_{m}^{-}H_{m}^{-1}H_{x_{i}})\\
-(H_{x_{i}m}-H_{mm}H_{m}^{-1}H_{x_{i}})\text{\emph{\ensuremath{\tr}}}(D_{p}BD_{xx}^{2}u)-H_{m}(\text{\emph{\ensuremath{\tr}}}(D_{p}B_{x_{i}}D_{xx}^{2}u)-\text{\emph{\ensuremath{\tr}}}(D_{p}B_{m}D_{xx}^{2}u)H_{m}^{-1}H_{x_{i}}),\label{eq:Axi}
\end{multline}

\begin{multline}
D_{x}b(x,Du)\cdot p=-B_{m}(D_{xx}^{2}Hp^{T}-D_{x}H_{m}^{T}H_{m}^{-1}(D_{x}H\cdot p))-D_{x}H(D_{x}B_{m}p^{T}-B_{mm}H_{m}^{-1}(D_{x}H\cdot p))\\
+H_{m}(D_{x}\textup{div}_{x}(B)\cdot p-\textup{div}_{x}B_{m}H_{m}^{-1}(D_{x}H\cdot p))+\textup{div}_{x}B(D_{x}H_{m}\cdot p-H_{mm}H_{m}^{-1}(D_{x}H\cdot p)),\label{eq:Dxb}
\end{multline}
\begin{align}
L_{u}v= & -\text{\emph{\ensuremath{\tr}}}(D^{2}\Phi D^{2}uAD^{2}u)+\sum_{i=1}^{d}\emph{\ensuremath{\tr}}(A_{x_{i}}D^{2}u)\Phi_{p_{i}}-D_{p}\Phi\cdot D_{x}b.\label{eq:bernst}
\end{align}
\end{lem}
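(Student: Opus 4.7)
The plan is to prove each identity by direct chain-rule computation, anchored on the observation that along any classical solution $(u,m)$ of (EMFG) the first equation gives the algebraic relation $m=H^{-1}(x,D_{x}u,u_{t})$. Implicit differentiation of $H(x,p,H^{-1}(x,p,s))=s$ then yields, for any smooth $F(x,p,m)$, the \emph{master chain-rule identities}
\begin{equation*}
D_{q}[F(x,p,H^{-1}(x,p,s))]\cdot q = D_{p}F\cdot p - F_{m}H_{m}^{-1}\zeta,\qquad \partial_{x_{i}}[F(x,p,H^{-1}(x,p,s))] = F_{x_{i}} - F_{m}H_{m}^{-1}H_{x_{i}},
\end{equation*}
where on the right the partial derivatives of $F$ are taken with the other arguments held fixed and $\zeta = D_{p}H\cdot p - s$. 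These two identities, applied to every factor appearing in the definitions (Q1)--(Q2) of $A$ and $b$, drive all of (3.9)--(3.12).

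For (3.9), the starting point is the expansion
\begin{equation*}
\tr(AD^{2}u) = \tfrac{1}{4}Y^{+}D_{xx}^{2}u(Y^{+})^{T} - Y^{+}\cdot D_{x}u_{t} + u_{tt} - \tfrac{1}{4}Y^{-}D_{xx}^{2}u(Y^{-})^{T} - H_{m}\tr(D_{p}BD_{xx}^{2}u),
\end{equation*}
obtained by contracting $D^{2}u$ against the rank-one piece $(Y^{+}/2,-1)\otimes(Y^{+}/2,-1)$ and the diagonal block of $A$. Applying $D_{q}\cdot q$ term by term and invoking the master identity on every factor depending on $m$ produces exactly the stated right-hand side: the first two terms combine into the factor $-D_{x}u_{t}+\tfrac{1}{2}Y^{+}D_{xx}^{2}u$, the fourth yields $-\tfrac{1}{2}Y^{-}D_{xx}^{2}u$, and applying the product rule to $-H_{m}\tr(D_{p}BD_{xx}^{2}u)$ gives the last two lines. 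Identity (3.10) follows the same template applied to the four-product expression (Q2) for $b$. Formulas (3.11) and (3.12) are the spatial analogues, obtained by replacing $D_{q}\cdot q$ with $\partial_{x_{i}}$ (or summing $\partial_{x_{i}}\cdot p_{i}$) and $\zeta$ with $H_{x_{i}}$.

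For (3.13), write $z=(x,t)$ and expand the Hessian of $v=\Phi(Du)$ by the chain rule:
\begin{equation*}
v_{z_{k}z_{l}} = \sum_{\alpha}\Phi_{q_{\alpha}}u_{z_{\alpha}z_{k}z_{l}} + \sum_{\alpha,\beta}\Phi_{q_{\alpha}q_{\beta}}u_{z_{\alpha}z_{k}}u_{z_{\beta}z_{l}},
\end{equation*}
so that $\tr(AD^{2}v) = \sum_{\alpha}\Phi_{q_{\alpha}}\tr(AD^{2}u_{z_{\alpha}}) + \tr(D^{2}\Phi\,D^{2}u\,A\,D^{2}u)$. To eliminate the third-order terms, differentiate the PDE $-\tr(AD^{2}u)+b=0$ in each coordinate $z_{\alpha}$ and solve for $\tr(AD^{2}u_{z_{\alpha}})$. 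Substituting into $L_{u}v = -\tr(AD^{2}v) - D_{q}\tr(AD^{2}u)\cdot Dv + D_{q}b\cdot Dv$ and using $Dv = D\Phi\,D^{2}u$, the cross-terms $\sum_{\alpha}\Phi_{q_{\alpha}}D_{q}\tr(AD^{2}u)\cdot \partial_{z_{\alpha}}Du$ and $\sum_{\alpha}\Phi_{q_{\alpha}}D_{q}b\cdot \partial_{z_{\alpha}}Du$ cancel exactly against one another. Because $A$ and $b$ have no explicit $t$-dependence, differentiating the PDE in $t$ produces no $A_{x_{\alpha}}$ or $b_{x_{\alpha}}$ source, so the $\alpha$ corresponding to $t$ contributes nothing and only the spatial sum $\sum_{i=1}^{d}\Phi_{p_{i}}[\tr(A_{x_{i}}D^{2}u)-b_{x_{i}}]$ survives, giving (3.13).

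The main obstacle is organizational rather than analytic: each of (3.9) and (3.10) contains four or five coupled products of derivatives of $H$ and $B$, and the rank-one-plus-block structure of $A$ generates cross-terms on differentiation that must be re-assembled into the symmetric factorized form displayed in the lemma. Once the master chain-rule identities are in hand, no further ideas are needed; the lemma is a computational stepping stone that packages all the differential bookkeeping required by the Bernstein argument of the next section.
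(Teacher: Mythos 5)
Your proposal is correct and follows essentially the same route as the paper: the identities (\ref{eq:D_q(A)})--(\ref{eq:Dxb}) come from differentiating the expressions (\ref{eq:matrix-1}) and (\ref{eq:first order term}) in $q$ and in $x$ (your ``master'' implicit-differentiation formulas for $H^{-1}$ are exactly the bookkeeping this requires), and (\ref{eq:bernst}) is obtained by differentiating the equation (\ref{eq:quasilinear}) and substituting into the definition (\ref{eq:Linearization definition}), with the first-order terms cancelling as you describe. The only cosmetic difference is that you spell out the chain-rule computations that the paper leaves as ``direct differentiation.''
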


\begin{proof}
We derive (\ref{eq:D_q(A)}) differentiating the expressions (\ref{eq:matrix-1})
with respect to $q=(p,s)$. Indeed, (\ref{eq:matrix-1}) implies that

\begin{align*}
D_{q}\text{\emph{\ensuremath{\tr}}}(AD^{2}u)\cdot q= & D_{q}(\tr\left(\left(\frac{1}{4}(Y^{+}-Y^{-})\otimes(Y^{+}-Y^{-})-H_{m}D_{p}B\right)D_{xx}^{2}u\right)-Y^{+}D_{x}u_{t})\cdot(p,s)\\
= & D_{q}(\frac{1}{4}Y^{+}D_{xx}^{2}u\cdot Y^{+}+\frac{1}{4}Y^{-}D_{xx}^{2}u\cdot Y^{-}-H_{m}\tr(D_{p}BD_{xx}^{2}u)-Y^{+}D_{x}u_{t})\cdot(p,s)\\
= & \frac{1}{2}Y^{+}D_{xx}^{2}u(D_{p}Y^{+}p^{T}-(Y_{m}^{+})^{T}H_{m}^{-1}\zeta)+\frac{1}{2}Y^{-}D_{xx}^{2}u(D_{p}Y^{-}p^{T}-(Y_{m}^{-})^{T}H_{m}^{-1}\zeta)\\
 & -(D_{p}H_{m}\cdot p-H_{mm}H_{m}^{-1}\zeta)\tr(D_{p}BD_{xx}^{2}u)-H_{m}(D_{p}\tr(D_{p}BD_{xx}^{2}u)\cdot p\\
 & -\tr(D_{p}B_{m}D_{xx}^{2}u)H_{mm}H_{m}^{-1}\zeta)-D_{x}u_{t}(D_{p}Y^{+}p^{T}-Y_{m}^{+}H_{m}^{-1}\zeta)\\
= & (-D_{x}u_{t}+\frac{1}{2}Y^{+}D_{xx}^{2}u)(D_{p}Y^{+}p^{T}-(Y_{m}^{+})^{T}H_{m}^{-1}\zeta)+\frac{1}{2}Y^{-}D_{xx}^{2}u(D_{p}Y^{-}p^{T}-(Y_{m}^{-})^{T}H_{m}^{-1}\zeta)\\
 & -(D_{p}H_{m}\cdot p-H_{mm}H_{m}^{-1}\zeta)\tr(D_{p}BD_{xx}^{2}u)-H_{m}(D_{p}\tr(D_{p}BD_{xx}^{2}u)\cdot p-\tr(D_{p}B_{m}D_{xx}^{2}u)H_{m}^{-1}\zeta).
\end{align*}
Similarly, (\ref{eq:Dqb}) follows by differentiating (\ref{eq:first order term})
with respect to $q$, and (\ref{eq:Axi}) and (\ref{eq:Dxb}) result
from differentiating (\ref{eq:matrix-1}) and (\ref{eq:first order term})
with respect to the space variables. Finally, (\ref{eq:bernst}) is
obtained by applying $D\Phi\cdot D$ to both sides of (\ref{eq:quasilinear})
(see, for instance, \cite[Lemma 3.4]{Munoz}).
\end{proof}
We can now obtain the a priori gradient bound in terms of bounds for
the solution $u$ and the terminal density $m(\cdot,T)$, which were
already obtained in Section \ref{subsec:estimates u and m(T)}. 
\begin{lem}
\label{lem: gradient a priori bound} Let $(u,m)\in C^{3}(\overline{Q_{T}})\times C^{2}(\overline{Q_{T}})$
be a solution to \textup{(\ref{eq:emfg})}, and set 
\begin{align}
M=||m||_{C^{0}(\partial Q_{T})}+||m^{-1}||_{C^{0}(\partial Q_{T})}.\label{eq:beta_K}
\end{align}
There exist constants $C,C_{1}>0$, with
\begin{gather*}
C=C(C_{1},||(\psi\circ h)^{-1}||_{C^{0}(0,C_{1}]}),\;\;C_{1}=C_{1}(C_{0},T,T^{-1},||u||_{C^{0}(\QT)},M,||\overline{C}||_{C^{0}[\frac{1}{M},M]},||\psi||_{C^{0}[\frac{1}{M},M]},\\
||\psi^{-1}||_{C^{0}[\frac{1}{M},M]},||D_{x}g||_{C^{0}(\mathbb{T}^{d}\times[\frac{1}{M},M])},(2\gamma_{1}-\gamma+2-\gamma_{2})^{-1})
\end{gather*}
such that
\[
||Du||_{C^{0}(\QT)}\leq C.
\]
\end{lem}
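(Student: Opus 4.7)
The plan is to follow the Bernstein strategy sketched in the previous subsection, applied to the quasilinear formulation (\ref{eq:quasilinear}). Since Lemma \ref{lem:aprioriu} already controls $\|u\|_{C^{0}(\QT)}$ and $\|m\|_{C^{0}(\partial Q_{T})}$, it suffices to bound the auxiliary function
\[
v(x,t) = \Phi(Du(x,t)) + \phi(u(x,t)),
\]
where $\Phi(p,s) = (1+|p|^{2}+s^{2})^{\beta/2}$ and $\phi(r) = e^{\lambda r}$ are chosen for parameters $\beta,\lambda>0$ to be tuned; by coercivity of $\Phi$, a uniform bound on $v$ yields one on $Du$.

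At any interior maximum of $v$ the relation $L_{u}v\ge 0$, combined with (\ref{eq:bernstein inf}) and (\ref{eq:bernst}), produces
\[
\lambda^{2}e^{\lambda u}\,Du\,A\,Du^{T} + \tr(D^{2}\Phi\,D^{2}u\,A\,D^{2}u) \le \sum_{i=1}^{d}\Phi_{p_{i}}\tr(A_{x_{i}}D^{2}u) - D_{p}\Phi\cdot D_{x}b.
\]
The strict ellipticity (\ref{eq:ellipticity}) provides a positive lower bound on $A$ controlled by $-H_{m}D_{p}B$, which by (\ref{eq:Hm growth}) and (\ref{eq:DpB}) is of size $\psi(m)^{2}(1+|p|)^{\gamma_{1}+\gamma-2}$. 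Expanding the right-hand side via (\ref{eq:Axi}) and (\ref{eq:Dxb}), and invoking (\ref{eq:HX HXX HXP bd}), (\ref{eq:mHmx bd}), (\ref{eq:Bx Bxx Bxm bd}), (\ref{eq:Bxp bd}) together with the positive lower and upper bounds on $m$ from Lemma \ref{lem:aprioriu}, every term is bounded by a constant multiple of $\psi(m)^{2}(1+|p|)^{\gamma+\gamma_{2}}|D^{2}u|$ times a polynomial factor in $|Du|$. A Cauchy--Schwarz splitting against the elliptic left-hand side then absorbs the error precisely when $\gamma_{2} < 2\gamma_{1}-\gamma+2$, yielding the interior bound.

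For boundary maxima, a parallel Bernstein argument is applied to the first-order oblique operator $N$, obtained by differentiating the boundary equation and using its non-degenerate obliqueness. The coercive term produced is $D_{p}H\cdot D_{x}u$, of order $\psi(m)|D_{x}u|^{\gamma}$ by (\ref{eq:H qg}). On $\{t=0\}$ the error terms involving $D_{x}m(\cdot,0)$ are harmless since $m(\cdot,0) = m_{0}$ is known a priori in $C^{4}$ by (\ref{eq:M1}). On $\{t=T\}$ we linearize $N(x,T,u,Du) = u - g(x,H^{-1}(x,D_{x}u,u_{t}))$; the unknown terms $D_{x}m(\cdot,T)$ appear with the factor $-g_{m} < 0$ coming from the Robin structure of $g$ in $N$ and therefore carry the favorable sign, while the remaining errors grow at most like $\psi(m)(1+|D_{x}u|)^{\gamma_{2}}$ by (\ref{eq:HX HXX HXP bd}) and (\ref{eq:Bx Bxx Bxm bd}), and are dominated since $\gamma_{2}\le\gamma$. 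This closes the boundary estimate and, combined with the interior bound, finishes the proof.

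\textbf{Main obstacle.} The most delicate step is the interior absorption in the fully coupled regime $\gamma_{1}<\gamma$: the contributions to $\tr(A_{x_{i}}D^{2}u)$ in which $D_{x}H$ multiplies $Y^{\pm}\cdot D_{xx}^{2}u$ have the largest $|p|$-growth and cannot be absorbed by the quadratic term $Du\,A\,Du^{T}$ alone; they must instead be balanced against $\tr(D^{2}\Phi\,D^{2}u\,A\,D^{2}u)$ through a careful Cauchy--Schwarz decomposition exploiting the different growth rates of the $(xx)$- and $(xt)$-blocks of $D^{2}u$. The threshold $\gamma_{2} < 2\gamma_{1}-\gamma+2$ is exactly what makes this algebraic balance work, and the explicit dependence of $C_{1}$ on $(2\gamma_{1}-\gamma+2-\gamma_{2})^{-1}$ reflects the blow-up of the implicit constants as that gap closes.
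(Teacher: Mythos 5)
Your overall strategy (interior Bernstein estimate plus a linearized oblique boundary argument, with absorption governed by $\gamma_{2}<2\gamma_{1}-\gamma+2$) matches the paper's, but two steps do not go through as written. First, in the interior case you invoke ``positive lower and upper bounds on $m$ from Lemma \ref{lem:aprioriu}''. That lemma only controls $m$ on $\partial Q_{T}$ (it bounds $m(\cdot,T)$, while $m(\cdot,0)=m_{0}$ is given); at an interior maximum no bound on $m$ is yet available --- the global bound is Corollary \ref{cor:m global bounds}, which is proved \emph{from} the gradient estimate. The hypotheses (H), (B) are structured precisely so that the $m$-dependence cancels in ratios ($mH_{m}$, $H_{mm}/H_{m}$, $D_{p}B\sim m\psi(m)$, etc.), and the interior Bernstein inequality then yields only $\psi(m)|p|^{\gamma}\leq C$ (respectively $\psi(m)|p|^{(\gamma+\gamma_{1})/2}\leq C$ when $\gamma_{1}<\gamma$). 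One must still invoke (\ref{eq:upper coer}) together with the a priori lower bound on $u_{t}=H$ to conclude $m\leq h(C_{1})$, hence $\psi(m)$ bounded below, and only then $|p|\leq C$; this is exactly where the stated dependence of $C$ on $||(\psi\circ h)^{-1}||_{C^{0}(0,C_{1}]}$ (and, for $\gamma_{1}<\gamma$, the need for (\ref{eq:psi positive})) comes from. Your argument omits this step, and without it the estimate does not close for congestion-type $\psi$ with $\psi(m)\rightarrow0$.

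Second, the auxiliary function $v=\Phi(Du)+e^{\lambda u}$ cannot handle a maximum on $\{t=0\}$. At such a point $D_{x}v=0$ and $v_{t}\leq0$, so $0\leq T_{u}v$ with $T_{u}=-\partial_{t}+D_{p}H\cdot D_{x}$, and the contribution of $e^{\lambda u}$ is $\lambda e^{\lambda u}(-u_{t}+D_{p}H\cdot D_{x}u)\geq\lambda e^{\lambda u}(\tfrac{1}{C_{0}}H-\overline{C}(m_{0}))$, which by (\ref{eq:H qg}) and (\ref{eq:H lower and upper bounds}) is \emph{positive} and coercive --- the wrong sign. Since at $t=0$ there is no Robin-type term (differentiating $-u_{t}+H(x,D_{x}u,m_{0})=0$ produces only error terms of order $|p|^{\gamma_{1}+1}$, $|p|^{\gamma_{2}+1}$, with $D_{x}m_{0}$ known), the inequality $0\leq(\text{positive terms})$ gives no bound. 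The paper resolves this by replacing $\phi(u)$ with $\tfrac{k}{2}\utilde^{2}$, where $\utilde$ is an affine-in-$t$ shift of $u$ normalized so that $\utilde(\cdot,0)\leq-1$ and $\utilde(\cdot,T)\geq1$: the coercive term $k\utilde(-\utilde_{t}+D_{p}H\cdot p)$ then carries the favorable negative sign at $t=0$ and a positive sign at $t=T$, and the weight $k=||D_{x}u||_{\QT}^{\kappa}$ with $\kappa=\max(\tfrac{1}{2}(\gamma_{2}-\gamma)+1,\gamma_{1}-\gamma+\tfrac{3}{2})$ is what makes the exponent bookkeeping, and hence the dependence on $(2\gamma_{1}-\gamma+2-\gamma_{2})^{-1}$, work. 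No fixed monotone $\phi(u)$ can produce the required sign change between the two time boundaries, so your parameters $\beta,\lambda$ cannot be tuned to repair this; you would need a sign-changing weight of the above type, together with the paper's two-stage reduction (first bounding $u_{t}$ via $L_{u}(u_{t})=0$, then estimating $|D_{x}u|$).
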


\begin{proof}
We will consider first, for the sake of clarity, the natural case
where $\gamma_{1}=\gamma_{2}=\gamma$. $C$ will denote a constant
that is allowed to increase from line to line. First, we verify that
it is sufficient to bound the space gradient. Indeed, setting $\Phi(p,s)=s$
in Lemma \ref{lem:(Bernstein's-method)} yields
\[
L_{u}(u_{t})=L_{u}(\Phi(Du))=0,
\]
and, thus, in view of the maximum principle and (\ref{eq:H lower and upper bounds}),

\begin{equation}
-C\leq u_{t}\leq C||D_{x}u||_{\QT}^{\gamma}+C.\label{eq:ut bound}
\end{equation}
We note that, in (\ref{eq:ut bound}), the constant $C$ already depends
on the upper and lower bounds for $m$ on $\partial Q_{T}$. Next,
we will estimate $||D_{x}u||_{\QT}$ through the Bernstein method.
Let
\[
T_{u}v=-v_{t}+D_{p}H(x,D_{x}u,m)D_{x}v,\;\;\widetilde{u}=u+||u||_{C^{0}(\QT)}+1-\frac{2(||u||_{C^{0}(\QT)}+1)}{T}(T-t),
\]
and note that the function $\utilde$ has been constructed to satisfy
\begin{equation}
|\utilde|\leq C,\quad\widetilde{u}(\cdot,0)\leq-1,\;\widetilde{u}(\cdot,T)\geq1.\label{signs utilde}
\end{equation}
Setting
\begin{equation}
k=||D_{x}u||_{\QT}^{3/2},\;\;v(x,t)=\frac{k}{2}\widetilde{u}^{2}+\frac{1}{2}|D_{x}u|^{2},\label{eq: k defi}
\end{equation}
we observe that the quantities $||D_{x}u||_{\QT}^{2}$ and $||v||_{\QT}$
are comparable up to constants, so it is therefore sufficient to obtain
a bound for the latter.

Let $(x_{0},t_{0})\in\overline{Q_{T}}$ be a point where $v$ achieves
its maximum value, and set $p=D_{x}u(x_{0},t_{0})$. We assume with
no loss of generality that 
\begin{equation}
|p|\geq1\,\,\text{ and }\,\,||D_{x}u||_{\QT}^{1/2}\geq2||\utilde||_{\QT}^{2}.\label{eq:p>=00003D1}
\end{equation}
The latter condition ensures that 
\begin{equation}
\frac{1}{2}|p|^{2}\geq\frac{1}{2}||D_{x}u||_{\QT}^{2}-\frac{k}{2}||\utilde||_{\QT}^{2}\geq\frac{1}{4}||D_{x}u||_{\QT}^{2}.\label{eq:|p|^2 geq ||Dxu||^2}
\end{equation}
Since the maximum may be achieved at the boundary of $Q_{T}$, we
must distinguish three cases.

\textbf{Case 1: }$t_{0}=T$. Then $D_{x}v=0$, $v_{t}\geq0$. Therefore,
in view of (\ref{eq:H lower and upper bounds}), (\ref{signs utilde}),
(\ref{eq:Hm growth}), (\ref{eq:H qg}), (\ref{eq:|p|^2 geq ||Dxu||^2}),
and the current assumption that $\gamma_{1}=\gamma_{2}=\gamma,$
\begin{multline*}
0\geq T_{u}v=T_{u}\left(\frac{1}{2}|D_{x}u|^{2}\right)+k\utilde(-\widetilde{u}_{t}+D_{p}H\cdot D_{x}u)\\
=-\frac{H_{m}}{g_{m}}(|p|^{2}-D_{x}g\cdot p)-D_{x}H\cdot p+k\utilde(-u_{t}+D_{p}H\cdot p-C)\geq\frac{\psi(m(T))}{C_{0}m(T)g_{m}}|p|^{\gamma+2}\\
-\left(C_{0}\frac{\psi(m(T))}{m(T)g_{m}}|p|^{\gamma}+\overline{C}(m(T))\right)|D_{x}g||p|-C\psi(m(T))|p|^{\gamma+1}+k\utilde\left(\frac{1}{C}\psi(m(T))|p|^{\gamma}-C\right)\\
\geq\frac{1}{C}|p|^{\gamma+3/2}+\frac{\psi(m(T))}{C_{0}m(T)g_{m}}|p|^{\gamma+2}-C(1+|p|^{\gamma+1}).
\end{multline*}
Thus, since the second term is non-negative, we obtain
\[
\frac{1}{C}|p|^{\gamma+3/2}\leq C(1+|p|^{\gamma+1}),
\]
which yields 
\[
|D_{x}u|\leq C.
\]

\textbf{Case 2: }$t_{0}=0$. Similarly to the first case, we obtain
$D_{x}v=0$, $v_{t}\leq0$, and so
\begin{align*}
0\leq T_{u}v= & T_{u}\left(\frac{1}{2}|D_{x}u|^{2}\right)+k\utilde(-\widetilde{u}_{t}+D_{p}H\cdot D_{x}u)\\
= & -H_{m}D_{x}m_{0}(x)\cdot p-D_{x}H\cdot p+k\utilde(-u_{t}+D_{p}H\cdot p-C)\\
\leq & Cm_{0}^{-1}(|p|^{\gamma}\psi(m_{0})+\overline{C}(m_{0}))|p|+C\psi(m_{0})|p|^{\gamma+1}+k\utilde(\frac{1}{C_{0}}\psi(m_{0})|p|^{\gamma}-C)\\
\leq & -\frac{1}{C}\psi(m_{0})|p|^{\gamma+3/2}+C(1+|p|^{\gamma+1}),
\end{align*}
and, once more, we conclude that
\[
|D_{x}u|\leq C.
\]

\textbf{Case 3: }$0<t_{0}<T$. Then $Dv=0$, $D^{2}v\leq0$, which
yields
\begin{equation}
0\leq L_{u}v.\label{eq:linearization ineq}
\end{equation}
By direct computation, we see from (\ref{eq:quasilinear}) that

\[
L_{u}\left(\frac{1}{2}\utilde^{2}\right)=-D\utilde AD\utilde+\utilde L_{u}(\utilde)=-D\utilde AD\utilde-\utilde D_{q}\tr(AD^{2}u)\cdot D\utilde+\utilde D_{q}b\cdot D\utilde-\utilde b,
\]
whereas letting $\Phi(p,s)=\frac{1}{2}|p|^{2}$ in Lemma \ref{lem:(Bernstein's-method)},

\[
L_{u}\left(\frac{1}{2}|D_{x}u|^{2}\right)=-\sum_{i=1}^{d}Du_{x_{i}}A\cdot Du_{x_{i}}+\sum_{i=1}^{d}\tr(A_{x_{i}}D^{2}u)u_{x_{i}}-D_{x}b\cdot p,
\]
and thus
\begin{align}
L_{u}(v) & =-kD\utilde A\cdot D\utilde-\sum_{i=1}^{d}Du_{x_{i}}A\cdot Du_{x_{i}}+E,\label{eq:lineariz(v)}
\end{align}
where $E$ is the error term, computed as follows. Setting $\Lambda=D_{x}+\utilde kD_{p}$
and using Lemma \ref{lem:(Bernstein's-method)} once more, we have
$E=E_{1}+E_{2},$ with

\begin{multline}
E_{1}=(-D_{x}u_{t}+\frac{1}{2}Y^{+}D_{xx}^{2}u)\Lambda Y^{+}\cdot p+\frac{1}{2}Y^{-}D_{xx}^{2}u\Lambda Y^{-}\cdot p-\tr(D_{p}BD_{xx}^{2}u)\Lambda H_{m}\cdot p\\
-H_{mm}H_{m}^{-1}(D_{x}H\cdot p+\utilde k\zeta)\tr(D_{p}BD_{xx}^{2}u)-H_{m}(\tr(\Lambda D_{p}BD_{xx}^{2}u)\cdot p\\
-\tr(D_{p}B_{m}D_{xx}^{2}u)H_{m}^{-1}(D_{x}H\cdot p+\utilde k\zeta)),\label{eq:E1 terms}
\end{multline}
\begin{multline}
E_{2}=-B_{m}\cdot(\Lambda D_{x}Hp^{T}-D_{x}H_{m}H_{m}^{-1}(D_{x}H\cdot p+\utilde k\zeta))-D_{x}H\cdot(\Lambda B_{m}p^{T}-B_{mm}H_{m}^{-1}(D_{x}H\cdot p+\utilde k\zeta))\\
+H_{m}(\Lambda\text{div}_{x}Bp^{T}-\text{div}_{x}B_{m}H_{m}^{-1}(D_{x}H\cdot p+\utilde k\zeta))+\textup{div}_{x}B(\Lambda H_{m}\cdot p-H_{mm}H_{m}^{-1}(D_{x}H\cdot p+\utilde k\zeta))-k\utilde b.\label{eq:E2 terms}
\end{multline}
Before estimating the $E_{i}$, compute lower bounds for the dominant
signed terms in (\ref{eq:lineariz(v)}), in the following way. Setting
$r=(1+\frac{1}{2C_{0}})^{-1}$, and using (\ref{eq:matrix-1}), we
may write 

\begin{multline*}
DuA\cdot Du=|-u_{t}+\frac{1}{2}Y^{+}\cdot p|^{2}-|\frac{1}{2}Y^{-}\cdot p|^{2}-H_{m}pD_{p}B\cdot p=|-u_{t}+\frac{1}{2}Y^{+}\cdot p|^{2}\\
-|\frac{1}{2}Y^{-}\cdot p|^{2}-rH_{m}pD_{p}B\cdot p-(1-r)H_{m}pD_{p}B\cdot p.
\end{multline*}
Now, in view of (\ref{eq:ellipticity}), observing that $r(1+\frac{1}{C_{0}})>1$
and $r<1$, we obtain

\begin{multline}
DuA\cdot Du\geq|-u_{t}+\frac{1}{2}Y^{+}\cdot p|^{2}-|\frac{1}{2}Y^{-}\cdot p|^{2}+r(1+\frac{1}{C_{0}})|\frac{1}{2}Y^{-}\cdot p|^{2}-(1-r)H_{m}pD_{p}B\cdot p\\
=|-u_{t}+\frac{1}{2}Y^{+}\cdot p|^{2}+(r(1+\frac{1}{C_{0}})-1)|\frac{1}{2}Y^{-}\cdot p|^{2}-(1-r)H_{m}pD_{p}B\cdot p\\
\geq|-u_{t}+\frac{1}{2}Y^{+}\cdot p|^{2}+\frac{1}{C}|\frac{1}{2}Y^{-}\cdot p|^{2}-\frac{1}{C}H_{m}pD_{p}B\cdot p.\label{eq:signed 1}
\end{multline}
Similarly, (\ref{eq:matrix-1}) and (\ref{eq:ellipticity}) yield

\begin{multline}
\sum_{i=1}^{d}Du_{x_{i}}A\cdot Du_{x_{i}}=|-D_{x}u_{t}+\frac{1}{2}Y^{+}D_{xx}^{2}u|^{2}-|\frac{1}{2}Y^{-}D_{xx}^{2}u|^{2}-H_{m}\tr D_{xx}^{2}uD_{p}B\cdot D_{xx}^{2}u\\
\geq|-D_{x}u_{t}+\frac{1}{2}Y^{+}D_{xx}^{2}u|^{2}+\frac{1}{C}|\frac{1}{2}Y^{-}D_{xx}^{2}u|^{2}-\frac{1}{C}H_{m}\tr(D_{xx}^{2}uD_{p}B\cdot D_{xx}^{2}u).\label{signed 2}
\end{multline}

Therefore, (\ref{eq:matrix-1}) and (\ref{eq:ellipticity}) yield
\begin{align}
D\utilde A\cdot D\utilde & =|-\utilde_{t}+\frac{1}{2}Y^{+}\cdot p|^{2}-|\frac{1}{2}Y^{-}\cdot p|^{2}-H_{m}pD_{p}B\cdot p\geq\frac{1}{2}DuA\cdot Du-C,\label{eq:signed 2''}
\end{align}
and, on the other hand, since $\frac{1}{2}(Y^{+}+Y^{-})=D_{p}H$,
\begin{equation}
|\zeta|^{2}=|-\tilde{u}_{t}+D_{p}H\cdot D_{x}\utilde|^{2}\leq2(|-\utilde_{t}+\frac{1}{2}Y^{+}\cdot D_{x}\utilde|^{2}+|\frac{1}{2}Y^{-}\cdot D_{x}\utilde|^{2})\leq CD\utilde A\cdot D\utilde.\label{eq:signed 2'}
\end{equation}

Applying Young's inequality and (\ref{signed 2}) in (\ref{eq:E1 terms}),
we obtain
\begin{multline}
|E_{1}|\leq\frac{1}{2}\sum_{i=1}^{d}Du_{x_{i}}A\cdot Du_{x_{i}}+C[|\Lambda Y^{+}|^{2}|p|^{2}+|\Lambda Y^{-}|^{2}|p|^{2}+|H_{m}|^{-1}|D_{p}B||\Lambda H_{m}|^{2}|p|^{2}\\
+|H_{mm}|^{2}|H_{m}|^{-3}(|D_{x}H|^{2}|p|^{2}+k^{2}\zeta^{2})|D_{p}B|+|H_{m}||\Lambda D_{p}B|^{2}|D_{p}B|^{-1}|p|^{2}\\
+|D_{p}B_{m}|^{2}|D_{p}B|^{-1}|H_{m}|^{-1}(|D_{x}H|^{2}|p|^{2}+k^{2}\zeta^{2})].\label{eq:E1 terms'}
\end{multline}
Now, the terms in (\ref{eq:E1 terms'}) may all be estimated with
the help of the growth assumptions \hyperref[eq:Hpp bd]{(H)} and
\hyperref[eq:DpB]{(B)}. Indeed, in view of (\ref{eq:Hpp bd}), (\ref{eq:HX HXX HXP bd}),
(\ref{eq:mHmx bd}), (\ref{eq:Bx Bxx Bxm bd}), (\ref{eq: Bm bd, DpBm bd,  DppB bd}),
and (\ref{eq:p>=00003D1}), we estimate
\begin{equation}
|\Lambda Y^{+}|^{2},|\Lambda Y^{-}|^{2}\leq C(|D_{xp}^{2}H|^{2}+|D_{x}B_{m}|^{2}+k^{2}|D_{pp}^{2}H|^{2}+k^{2}|D_{p}B_{m}|^{2})\leq C\psi(m)^{2}(|p|^{2\gamma-2}+|p|^{2\gamma-1}),\label{eq:Lambda1}
\end{equation}
\begin{equation}
|\Lambda H_{m}|^{2}\leq C|D_{x}H_{m}|^{2}+Ck^{2}|D_{p}H_{m}|^{2}\leq C\psi(m)^{2}m^{-2}(|p|^{2\gamma}+|p|^{2\gamma+1}),\label{Lambda2}
\end{equation}
\begin{equation}
|\Lambda D_{p}B|^{2}\leq C|D_{xp}^{2}B|^{2}+Ck^{2}|D_{pp}^{2}B|^{2}\leq C\psi(m)^{2}m^{2}(|p|^{2\gamma-4}+|p|^{2\gamma-3}).\label{Lambda 3}
\end{equation}
Thus, using (\ref{eq:Hm growth}), (\ref{eq:Hm bd}), (\ref{eq:DpB}),
(\ref{eq: Bm bd, DpBm bd,  DppB bd}), (\ref{eq:HX HXX HXP bd}),
(\ref{eq:Lambda1}), (\ref{Lambda2}), and (\ref{Lambda 3}) in (\ref{eq:E1 terms'})
yields
\begin{equation}
|E_{1}|\leq\frac{1}{2}\sum_{i=1}^{d}Du_{x_{i}}A\cdot Du_{x_{i}}+C\psi(m)^{2}|p|^{2\gamma+1}+C|p|^{-1/2}k\zeta^{2}.\label{eq:E1 bound}
\end{equation}
Similarly, for the second error term, we use Young's Inequality and
(\ref{eq:signed 2'}) in (\ref{eq:E2 terms}), obtaining
\begin{multline}
|E_{2}|\leq\frac{1}{4}kDuA\cdot Du+C[|B_{m}||\Lambda D_{x}H||p|+|B_{m}||D_{x}H_{m}||H_{m}^{-1}||D_{x}H||p|+k|B_{m}|^{2}|D_{x}H_{m}|^{2}|H_{m}|^{-2}\\
+|D_{x}H|(|\Lambda B_{m}||p|+|B_{mm}||H_{m}|^{-1}|D_{x}H||p|)+k|D_{x}H|^{2}|B_{mm}|^{2}|H_{m}|^{-2}+|H_{m}||\Lambda\text{div}_{x}B||p|\\
+|\text{div}_{x}B_{m}||D_{x}H||p|+|\text{div}_{x}B_{m}|^{2}k+|\textup{div}_{x}B|(|\Lambda H_{m}||p|\\
+|H_{mm}||H_{m}|^{-1}|D_{x}H||p|)+k|\textup{div}_{x}B|^{2}|H_{mm}|^{2}|H_{m}|^{-2}+k|\utilde|(|H_{m}||\text{div}_{x}B|+|B_{m}||D_{x}H|)].\label{eq:E2 young}
\end{multline}
In view of (\ref{eq:HX HXX HXP bd}), (\ref{eq: Bm bd, DpBm bd,  DppB bd}),
(\ref{eq:Bxp bd}), and (\ref{eq:Bx Bxx Bxm bd}), we obtain

\[
|\Lambda D_{x}H|\leq C(|D_{xx}^{2}H|+k|D_{px}^{2}H|)\leq C\psi(m)(|p|^{\gamma_{2}}+|p|^{\gamma_{2}+1/2}),
\]
\[
|\Lambda\text{div}_{x}B|\leq C(|D_{xx}^{2}B|+k|D_{px}^{2}B|)\leq C\psi(m)(|p|^{\gamma_{2}-1}+|p|^{\gamma_{2}-1/2}),
\]
\[
|\Lambda B_{m}|\leq C(|D_{x}B_{m}|+k|D_{p}B_{m}|)\leq C\psi(m)(|p|^{\gamma_{2}-1}+|p|^{\gamma_{1}-1/2}),
\]
\[
|\Lambda\text{div}_{x}B|\leq C(|D_{xx}^{2}B|+k|D_{px}^{2}B|)\leq C\psi(m)(|p|^{\gamma_{2}-1}+|p|^{\gamma_{2}-1/2}).
\]
Consequently, (\ref{eq:E2 young}), (\ref{eq:Hm growth}), (\ref{eq:Hm bd}),
(\ref{eq: Bm bd, DpBm bd,  DppB bd}), (\ref{eq:HX HXX HXP bd}),
(\ref{eq:mHmx bd}), (\ref{eq:Bmm bd}), and (\ref{eq:Bx Bxx Bxm bd})
imply
\begin{equation}
|E_{2}|\leq\frac{1}{4}kDuA\cdot Du+C\psi(m)^{2}|p|^{2\gamma+1}.\label{eq:E2 bound}
\end{equation}
Having estimated the error terms, (\ref{eq:lineariz(v)}), (\ref{eq:signed 2''}),
(\ref{eq:E1 bound}), (\ref{eq:signed 2'}), and (\ref{eq:E2 bound})
yield
\begin{multline*}
L_{u}(v)=-kD\utilde A\cdot D\utilde-\sum_{i=1}^{d}Du_{x_{i}}A\cdot Du_{x_{i}}+E\\
\leq-\frac{k}{8}DuA\cdot Du-\frac{1}{C}k\zeta^{2}-\frac{1}{2}\sum_{i=1}^{d}Du_{x_{i}}A\cdot Du_{x_{i}}+C\psi(m)^{2}|p|^{2\gamma+1}+C|p|^{-1/2}k\zeta^{2}+Ck.
\end{multline*}
Therefore, in view of (\ref{eq:|p|^2 geq ||Dxu||^2}), (\ref{eq:signed 1}),
(\ref{eq:DpB}), and (\ref{eq:Hm growth}), 
\begin{multline*}
L_{u}(v)\leq\frac{k}{8}H_{m}pD_{p}B\cdot p-\frac{1}{2C}k|\zeta|^{2}+C\psi(m)^{2}|p|^{2\gamma+1}+C|p|^{-1/2}k\zeta^{2}+C|p|^{3/2}\\
\leq-\frac{1}{8C_{0}^{2}}\psi(m)^{2}|p|^{2\gamma+3/2}-\frac{\int1}{2C}k|\zeta|^{2}+C\psi(m)^{2}|p|^{2\gamma+1}+C|p|^{-1/2}k\zeta^{2}+C|p|^{3/2}\\
\leq-\psi(m)^{2}(\frac{1}{8C_{0}^{2}}|p|^{2\gamma+3/2}-C|p|^{2\gamma+1})-k\zeta^{2}(\frac{1}{2C}-C|p|^{-1/2})+C|p|^{3/2}.
\end{multline*}
So, given that $(x_{0},t_{0})$ is a maximum point of $v$, we have
$L_{u}(v)\geq0$, and, thus,

\begin{equation}
\psi(m)^{2}(\frac{1}{8C_{0}^{2}}|p|^{2\gamma+3/2}-C|p|^{2\gamma+1})+k\zeta^{2}(\frac{1}{2C}-C|p|^{-1/2})\leq C|p|^{3/2}.\label{eq:penult}
\end{equation}
This implies that either $\frac{1}{2C}-C|p|^{-1/2}\leq0$ or $\psi(m)^{2}(\frac{1}{8C_{0}^{2}}|p|^{2\gamma+3/2}-C|p|^{2\gamma+1})\leq C|p|^{3/2}$.
If the former holds, there is nothing to prove, so we may assume the
latter. We may further assume that $|p|$ is large enough that $\frac{1}{8C_{0}^{2}}|p|^{2\gamma+3/2}-C|p|^{2\gamma+1}\geq\frac{1}{16C_{0}^{2}}|p|^{2\gamma+3/2}$.
We thus obtain 
\begin{equation}
\psi(m)|p|^{\gamma}\leq C.\label{eq:fin}
\end{equation}
In view of (\ref{eq:upper coer}) and the fact that, by (\ref{eq:ut bound}),
$H$ is bounded below, we conclude that $m\leq C$. Hence $\psi(m)$
is bounded below, which finally yields $|p|\leq C$, concluding the
proof when $\gamma_{1}=\gamma$.

Now we describe the necessary changes in the proof to deal with the
case in which $\gamma_{1}<\gamma$. Setting 
\[
\eta=(2\gamma_{1}-\gamma+2)-\gamma_{2},
\]
we see, in view of (\ref{eq:condition exponents}), that $\eta>0$.
In (\ref{eq: k defi}), we replace $k$ by $k'=||D_{x}u||_{\QT}^{\kappa}$,
where 
\[
\kappa=\max\left(\frac{1}{2}(\gamma_{2}-\gamma)+1,\gamma_{1}-\gamma+\frac{3}{2}\right).
\]
The proofs of Case 1 and Case 2 follow through with no change until
the last step, leading in both cases to the inequality 
\begin{equation}
\frac{1}{C}|p|^{\gamma+\kappa}\leq C(1+|p|^{\gamma_{1}+1}+|p|^{\kappa}+|p|^{\gamma_{2}+1}).\label{eq:-2}
\end{equation}
By definition, $\kappa\geq\frac{3}{2}+\gamma_{1}-\gamma$, so the
left hand side of (\ref{eq:-2}) has higher degree than the right
hand side, and thus
\[
|p|\leq C.
\]
The proof of Case 3 proceeds analogously as well. (\ref{eq:E1 terms'})
and (\ref{eq:E2 young}) are obtained with no change. To estimate
the errors, instead of (\ref{eq:Lambda1}), (\ref{Lambda2}), and
(\ref{Lambda 3}), we now have the bounds
\[
|\Lambda Y^{+}|^{2},|\Lambda Y^{-}|^{2}\leq C(|D_{xp}^{2}H|^{2}+|D_{x}B_{m}|^{2}+k^{2}|D_{pp}^{2}H|^{2}+k^{2}|D_{p}B_{m}|^{2})\leq C\psi(m)^{2}(|p|^{2\gamma_{2}-2}+|p|^{2\gamma+2\kappa-4}),
\]
\[
|\Lambda H_{m}|^{2}\leq C|D_{x}H_{m}|^{2}+Ck^{2}|D_{p}H_{m}|^{2}\leq C\psi(m)^{2}m^{-2}(|p|^{2\gamma_{2}}+|p|^{2\gamma_{1}+2\kappa-2}),
\]
\[
|\Lambda D_{p}B|^{2}\leq C|D_{xp}^{2}B|^{2}+Ck^{2}|D_{pp}^{2}B|^{2}\leq C\psi(m)^{2}m^{2}(|p|^{2\gamma_{2}-4}+|p|^{2\gamma+2\kappa-6}).
\]
This allows us to estimate $E_{1}$ as before, this time obtaining
\begin{align}
|E_{1}| & \leq\frac{1}{2}\sum_{i=1}^{d}Du_{x_{i}}ADu_{x_{i}}+C\psi(m)^{2}(|p|^{2\gamma+2\kappa-2}+|p|^{2\gamma_{2}+\gamma-\gamma_{1}}+k|p|^{-(2+\gamma_{1}-\gamma-\kappa)}\zeta{}^{2}).\label{eq:E1 general}
\end{align}
Since the dominant power of $|p|$ in (\ref{eq:signed 1}) now has
the exponent 
\[
\alpha=\gamma+\gamma_{1}+\kappa,
\]
we must verify that (\ref{eq:E1 general}) does not have a higher
degree. Indeed,

\begin{multline}
\alpha-(2\gamma+2\kappa-2)=2+\gamma_{1}-\gamma-\kappa\geq\min(2+\gamma_{1}-\gamma-(\frac{1}{2}(\gamma_{2}-\gamma)+1),\gamma_{1}-\gamma+2-(\gamma_{1}-\gamma+\frac{3}{2}))=\frac{1}{2}\min(\eta,1),\label{alpha 1}
\end{multline}
\begin{equation}
\alpha-(2\gamma_{2}+\gamma-\gamma_{1})=2(\gamma_{1}-\gamma_{2})+\kappa\geq\gamma_{1}-\gamma_{2}+\kappa\geq\gamma_{1}-\gamma_{2}+\frac{1}{2}(\gamma_{2}-\gamma)+1=\frac{1}{2}\eta,\label{eq: alpha 2}
\end{equation}
Hence, letting $\epsilon=\frac{1}{2}\min(1,\eta)$, it follows from
(\ref{eq:E1 general}), (\ref{alpha 1}) and (\ref{eq: alpha 2}),
\begin{equation}
|E_{1}|\leq\frac{1}{2}\sum_{i=1}^{d}Du_{x_{i}}ADu_{x_{i}}+C\psi(m)^{2}(|p|^{\alpha-\epsilon}+k|p|^{-\epsilon}\zeta{}^{2}).\label{eq:E1 general F}
\end{equation}
Moving on to $E_{2}$, we first obtain 

\[
|\Lambda D_{x}H|\leq C(|D_{xx}^{2}H|+k|D_{px}^{2}H|)\leq C\psi(m)(|p|^{\gamma_{2}}+|p|^{\gamma_{2}-1+\kappa}),
\]
\[
|\Lambda\text{div}_{x}B|\leq C(|D_{xx}^{2}B|+k|D_{px}^{2}B|)\leq C\psi(m)(|p|^{\gamma_{2}-1}+|p|^{\gamma_{2}-2+\kappa}),
\]
\[
|\Lambda B_{m}|\leq C(|D_{x}B_{m}|+k|D_{p}B_{m}|)\leq C\psi(m)(|p|^{\gamma_{2}-1}+|p|^{\gamma_{1}-2+\kappa}),
\]
\[
|\Lambda\text{div}_{x}B|\leq C(|D_{xx}^{2}B|+k|D_{px}^{2}B|)\leq C\psi(m)(|p|^{\gamma_{2}-1}+|p|^{\gamma_{2}-2+\kappa}),
\]
and so, in place of (\ref{eq:E2 bound}),
\begin{align}
|E_{2}| & \leq\frac{1}{4}kDuA\cdot Du+C\psi(m)^{2}(|p|^{\gamma+\gamma_{2}}+|p|^{\gamma_{2}+\gamma+\kappa-1}+|p|^{2\gamma_{2}+\gamma-\gamma_{1}}+|p|^{\kappa+2\gamma_{2}-2+2\gamma-2\gamma_{1}}).\label{eq:E2 general}
\end{align}
We again verify that the exponents do not exceed $\alpha$,
\begin{equation}
\alpha-(\gamma+\gamma_{2})=\gamma_{1}-\gamma_{2}+\kappa\geq\gamma_{1}-\gamma_{2}+\frac{1}{2}(\gamma_{2}-\gamma)+1=\frac{1}{2}\eta,\label{eq: alpha 3}
\end{equation}
\begin{equation}
\alpha-(\gamma_{2}+\gamma+\kappa-1)=\gamma_{1}-\gamma_{2}+1\geq1,\label{eq: alpha 4}
\end{equation}
\begin{equation}
\alpha-(\kappa+2\gamma_{2}-2+2\gamma-2\gamma_{1})=(\gamma_{1}-\gamma_{2})+((2\gamma_{1}-\gamma+2)-\gamma_{2})\geq((2\gamma_{1}-\gamma+2)-\gamma_{2})=\eta.\label{eq: alpha 5}
\end{equation}
and thus, (\ref{eq:E2 general}), (\ref{eq: alpha 2}), (\ref{eq: alpha 3}),
(\ref{eq: alpha 4}), and (\ref{eq: alpha 5}) yield
\begin{equation}
|E_{2}|\leq\frac{1}{4}kDuA\cdot Du+C\psi(m)^{2}|p|^{\alpha-\epsilon}.\label{eq:E2 general f}
\end{equation}
Consequently, in view of (\ref{eq:E1 general F}) and (\ref{eq:E2 general f}),
we obtain, instead of (\ref{eq:penult}),

\[
\psi(m)^{2}(\frac{1}{8C_{0}^{2}}|p|^{\alpha}-C|p|^{\alpha-\text{\ensuremath{\epsilon}}})+k\zeta^{2}(\frac{1}{2C}-C|p|^{-\epsilon})\leq C|p|^{\kappa},
\]
and, thus, in place of (\ref{eq:fin}), this time we conclude

\begin{equation}
\psi(m)|p|^{\frac{\gamma+\gamma_{1}}{2}}\leq C.\label{eq:final}
\end{equation}
Since $\gamma_{1}<\gamma$, (\ref{eq:psi positive}) holds, and thus
we have (\ref{eq:-1}). This, together with (\ref{eq:final}), implies
that $|p|\leq C$, as wanted.
\end{proof}
The following lemma provides global, positive two-sided bounds for
the density in terms of the gradient bound.
\begin{lem}
\label{lem: m global bounds}Let $(u,m)\in C^{3}(\overline{Q_{T}})\times C^{2}(\overline{Q_{T}})$
be a solution to \textup{(\ref{eq:emfg})}, and set, for $K\in\mathbb{R},$
\[
\delta_{K}=\inf_{(x,p,s)\in\mathbb{T}^{d}\times\mathbb{R}^{d}\times(-\infty,K]}H^{-1}(x,p,s).
\]
 There exist constants $C,C_{1}>0$, with 
\[
C=C(C_{1},h(C_{1})),\;\;C_{1}=C_{1}(C_{0},||Du||_{C^{0}(\QT)},\delta_{||Du||}^{-1},||\psi||_{C^{0}[\delta_{||Du||},\infty)}),
\]
 such that 

\[
||m||_{C^{0}(\QT)}+||m^{-1}||_{C^{0}(\QT)}\leq C.
\]
\end{lem}

\begin{proof}
Due to (\ref{eq:lower coer}), $\delta_{K}>0$ is well-defined for
each $K\in\mathbb{R}$, and we may apply $H^{-1}(x,D_{x}u,\cdot)$
to both sides of the inequality

\[
H(x,D_{x}u,m)=u_{t}\leq||Du||_{C^{0}(\QT)},
\]
which yields, for $(x,t)\in\overline{Q_{T}}$,
\[
H^{-1}(x,D_{x}u,||Du||_{C^{0}(\QT)})\leq m(x,t).
\]
Letting $\delta=\delta_{||Du||}$, we thus obtain $\delta\leq m(x,t)$
and, hence,

\[
||m^{-1}||_{C^{0}(\QT)}\leq\delta^{-1}.
\]
On the other hand, 

\[
H(x,D_{x}u,m)-C_{0}\psi(m)|D_{x}u|^{\gamma}\geq u_{t}-C_{0}\psi(m)||D_{x}u||_{C^{0}(\QT)}^{\gamma}\geq-||Du||_{C^{0}(\QT)}-C_{0}||\psi||_{[\delta,\infty)}||D_{x}u||_{C^{0}(\QT)}^{\gamma}\geq-C_{1},
\]
which, in view of the definition of $h$ (see \ref{eq:h defi}), implies
that

\[
m\leq h(C_{1}).
\]
\end{proof}
We now summarize all of the a priori bounds obtained in this section.
\begin{thm}
\label{thm:Full C^1 a priori bound}Let $(u,m)\in C^{3}(\overline{Q_{T}})\times C^{2}(\overline{Q_{T}})$
be a solution to \textup{(\ref{eq:emfg})}, and let $\delta$ be defined
as in Lemma \ref{lem: m global bounds}. Then there exist constants
$M,M_{1},L,L_{1},K,K_{1}>0$, with
\[
L=\left(L_{1},g_{1}(f_{0}^{-1}(L_{1}))^{+},g_{0}(f_{1}^{-1}(-L_{1}))^{-},g_{0}^{-1}g_{1}(f_{0}^{-1}(L_{1})),\frac{1}{g_{1}^{-1}g_{0}(f_{1}^{-1}(-L_{1}))}\right),\;L_{1}=L_{1}(C_{0},T),
\]
\[
K=K(C_{1},||(\psi\circ h)^{-1}||_{C^{0}(0,K_{1}]}),
\]
\[
K_{1}=K_{1}(L,T^{-1},||\overline{C}||_{C^{0}[\frac{1}{L},L]},||\psi||_{C^{0}[\frac{1}{L},L]},||\psi^{-1}||_{C^{0}[\frac{1}{L},L]},||D_{x}g||_{C^{1}(\mathbb{T}^{d}\times[\frac{1}{L},L])},(2\gamma_{1}-\gamma+2-\gamma_{1})^{-1}),
\]

\[
M=M(M_{1},h(M_{1})),\;\;M_{1}=M_{1}(K,\delta_{K}^{-1},||\psi||_{C^{0}[\delta_{K},\infty)})
\]
such that
\[
||u||_{C^{0}(\overline{Q_{T}})}\leq L,\;\;||Du||_{C^{0}(\overline{Q_{T}})}\leq K,\;\;\text{and}\;\;||m||_{C^{0}(\QT)}+||m^{-1}||_{C^{0}(\QT)}\leq M.
\]
\end{thm}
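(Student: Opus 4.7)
The plan is simply to chain together the three estimates already established in this section, tracking how the constants propagate. No new analytic ingredient is needed; the only care required is to verify that the dependencies stated for $L, K, M$ are exactly what one obtains by composing the previous estimates in the correct order.

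First, I would apply Lemma \ref{lem:aprioriu} directly. The estimate (\ref{aprioriu1}) yields a bound $\|u\|_{C^0(\overline{Q_T})} \leq L$, where $L$ is expressed through $g_0, g_1, f_0^{-1}, f_1^{-1}$ evaluated at $\pm L_1$, with $L_1 = L_1(C_0, T)$ coming from the constant $C(C_0)$ and the exponential factor $e^{CT}$ in (\ref{aprioriu1}). The same lemma provides (\ref{eq:a priori m(T) inequality}), a positive two-sided bound for $m(\cdot, T)$ in terms of $g_0^{-1} g_1 f_0^{-1}$ and $g_1^{-1} g_0 f_1^{-1}$. Combined with the \emph{a priori} bounds on $m(\cdot,0) = m_0$ coming from assumption (\ref{eq:M1}), this controls the quantity
\[
M = \|m\|_{C^0(\partial Q_T)} + \|m^{-1}\|_{C^0(\partial Q_T)}
\]
appearing in Lemma \ref{lem: gradient a priori bound}, by a constant with exactly the dependencies listed for $L$.

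Next, with these boundary bounds for $m$ and the $C^0$ bound for $u$ in hand, I invoke Lemma \ref{lem: gradient a priori bound}. That lemma produces $\|Du\|_{C^0(\overline{Q_T})} \leq K$, with $K$ depending on a constant $K_1$ that in turn depends on $C_0$, $T$, $T^{-1}$, $\|u\|_{C^0}$, the boundary bound for $m$, and the norms of $\psi, \psi^{-1}, \overline{C}, D_x g$ on the interval $[1/L, L]$, together with $(2\gamma_1 - \gamma + 2 - \gamma_2)^{-1}$. Substituting the bound $\|u\|_{C^0} \leq L$ from the first step gives exactly the dependency structure stated for $K$ and $K_1$.

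Finally, I apply Corollary \ref{cor:m global bounds} with the gradient bound $K$ in place of $\|Du\|_{C^0(\overline{Q_T})}$. This yields
\[
\|m\|_{C^0(\overline{Q_T})} + \|m^{-1}\|_{C^0(\overline{Q_T})} \leq M,
\]
with $M = M(M_1, h(M_1))$ and $M_1 = M_1(K, \delta_K^{-1}, \|\psi\|_{C^0[\delta_K, \infty)})$, exactly matching the claim. The only step that requires the slightest thought is making sure the constants are composed in the right order: $L$ depends only on $C_0$ and $T$ through $f_0, f_1, g_0, g_1$; $K$ then depends on $L$; and $M$ finally depends on $K$. This is precisely the nested dependency structure written in the theorem, so the proof is complete.
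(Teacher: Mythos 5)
Your proposal is correct and follows exactly the paper's argument: the theorem is proved by combining Lemma \ref{lem:aprioriu} (which gives the bound $L$ on $u$ and the two-sided bound on $m(\cdot,T)$, supplemented by the given bounds on $m_0$ to control $m$ on all of $\partial Q_{T}$), then Lemma \ref{lem: gradient a priori bound} for $K$, and finally Corollary \ref{cor:m global bounds} for $M$, with the constants composed in that order. Your tracking of the dependency structure matches the paper's (one-line) proof, so nothing further is needed.
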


\begin{proof}
This result follows from combining Lemma \ref{lem:aprioriu}, Lemma
\ref{lem: gradient a priori bound}, and Lemma \ref{lem: m global bounds}.
\end{proof}

\subsection{Classical solutions}

Having obtained the gradient bound, the existence result follows through
the method of continuity.
\begin{proof}[Proof of Theorem \ref{thm:smoothsols}.]
We only sketch the proof, which follows the same steps as \cite[Theorem 1.1]{Munoz}.
We define, for $\theta\in[0,1]$ and $(x,p,m)\in\mathbb{T}^{d}\times\mathbb{R}^{d}\times(0,\infty),$
\begin{align*}
H^{\theta}(x,p,m)=\theta H(x,p,m)+(1-\theta)H(0,p,m),\;\;\;B^{\theta}(x,p,m)=\theta B(x,p,m)+(1-\theta)B(0,p,m),\\
g^{\theta}(x,m)=\theta g(x,m)+(1-\theta)m,\;\;\;m_{0}^{\theta}(x)=\theta m_{0}(x)+(1-\theta),
\end{align*}
and consider the family of (\ref{eq:emfg}) systems
\begin{equation}
\tag{\ensuremath{\text{EMFG}_{\theta}}}\begin{cases}
-u_{t}+H^{\theta}(x,D_{x}u,m)=0 & (x,t)\in Q_{T},\\
m_{t}-\textrm{div}(B^{\theta}(x,D_{x}u,m))=0 & (x,t)\in Q_{T},\\
m(0,x)=m_{0}^{\theta}(x),\;u(x,T)=g^{\theta}(x,m(x,T)) & x\in\mathbb{T}^{d},
\end{cases}\label{eq:MFG theta}
\end{equation}
together with the corresponding elliptic and boundary operators $Q^{\theta}$
and $N^{\theta}$ associated to them, according to (\ref{eq:quasilinear}).
We observe first that for $\theta=0$, the solution is simply $(u,m)\equiv((t-T)H(0,0,1)+1,1).$
Now, by definition of $g^{\theta}$,

\begin{align*}
g_{0}^{\theta}\circ g_{0}^{-1}\circ g_{1} & =\theta g_{0}\circ g_{0}^{-1}\circ g_{1}+(1-\theta)g_{0}^{-1}\circ g_{1}\geq\theta g_{1}+(1-\theta)g_{0}^{-1}\circ g_{0}=g_{1}^{\theta},
\end{align*}
so we obtain

\begin{equation}
(g_{0}^{\theta})^{-1}g_{1}^{\theta}\leq g_{0}^{-1}g_{1},\label{eq:gtheta0}
\end{equation}
and similarly

\begin{equation}
g_{1}^{-1}g_{0}\leq(g_{1}^{\theta})^{-1}g_{0}^{\theta}.\label{eq:gtheta1}
\end{equation}
Moreover, setting 
\begin{align*}
f_{0}^{\theta}(m) & =\min_{\mathbb{T}^{d}}(-H^{\theta}(\cdot,0,m))=\theta f_{0}(m)-(1-\theta)H(0,0,m),\text{ and }\\
f_{1}^{\theta}(m) & =\max_{\mathbb{T}^{d}}(-H^{\theta}(\cdot,0,m))=\theta f_{1}(m)-(1-\theta)H(0,0,m),
\end{align*}
it is readily seen that, by definition,

\begin{equation}
(f_{0}^{\theta})^{-1}\leq f_{0}^{-1},\;\;\;f_{1}^{-1}\leq(f_{1}^{\theta})^{-1}.\label{eq:f theta}
\end{equation}
In view of (\ref{eq:gtheta0}), (\ref{eq:gtheta1}), and (\ref{eq:f theta}),
Theorem \ref{thm:Full C^1 a priori bound} yields a constant $C$,
independent of $\theta$, such that
\[
||u^{\theta}||_{C^{1}(\QT)}\leq C.
\]
Moreover, the classical $C^{1,\alpha}$ estimates for oblique derivative
problems (see, for instance, \cite[Lemma 2.3]{Lieberman}) yield
\begin{equation}
||u^{\theta}||_{C^{1+s'}(\QT)}\leq C.\label{eq:inter}
\end{equation}
for some $s'$. Now we define the Banach spaces
\[
E=C^{3,s}(\overline{Q_{T}}),\;F=C^{1,s}(\overline{Q_{T}})\times C^{2,s}(\partial Q_{T}),
\]
and the continuously differentiable operator $S:E\times[0,1]\rightarrow F$
by
\[
S(u,\theta)=(Q^{\theta}u,N^{\theta}u),\;(u,\theta)\in E\times[0,1].
\]
Direct computation shows that, for fixed $(u,\theta)\in E\times[0,1]$,
the linearization $S_{u}$ of $S$ with respect to $u$ has the form
$(L_{(u,\theta)}^{1}w,L_{(u,\theta)}^{2}w)$, where $L_{(u,\theta)}^{1}$
is a linear, uniformly elliptic operator and $L_{(u,\theta)}^{2}$
is a linear oblique boundary operator. Moreover, the homogeneous problem
$(L_{(u,\theta)}^{1}w,L_{(u,\theta)}^{2}w)=(0,0)$ has only the trivial
solution. The standard Fredholm alternative for linear oblique problems
(see \cite{GilbargTrudinger}) thus implies that $S_{u}$ is invertible
in $C^{3,s}(\overline{Q_{T}})$. Thus, by the implicit function theorem,
the set
\[
D=\{\theta\in[0,1]:\text{the equation \ensuremath{S(u,\theta)=(0,0)} has a unique solution \ensuremath{u\in C^{3,s}(\overline{Q_{T}})}}\}
\]
is open in $[0,1]$. On the other hand, (\ref{eq:inter}) together
with the Schauder estimates for linear oblique problems imply that
$D$ is also closed. Since $0\in D$, we conclude that $D=[0,1]$,
and in particular $1\in D$, which completes the proof.
\end{proof}

\subsection*{Acknowledgements}

The author would like to thank P. E. Souganidis for valuable discussions,
comments, and suggestions. He also thanks the anonymous referees for
their invaluable help in improving and clarifying the manuscript.
The author was partially supported by P.E. Souganidis\textquoteright s
National Science Foundation grant DMS-1900599, the Office for Naval
Research grant N000141712095 and the Air Force Office for Scientific
Research grant FA9550-18-1-0494.

{{\footnotesize   \bigskip   \footnotesize   \textsc{Department of Mathematics, University of Chicago, Illinois, 60637, USA}\\ \nopagebreak \textit{E-mail address}: \texttt{sbstn@math.uchicago.edu}}}
\end{document}